\DeclareSymbolFontAlphabet{\mathcal}{symbols}
\title[Truncations of stratified spaces]{Homotopy truncations of homotopically stratified spaces}
\date{\today}
\author{David Chataur}
\address{Lamfa\\
Universit\'e de Picardie Jules Verne\\
33, rue Saint-Leu\\
80039 Amiens Cedex~1\\
         France}
\email{David.Chataur@u-picardie.fr}
\author{Martintxo Saralegi-Aranguren}
\address{Laboratoire de Math{\'e}matiques de Lens\\  
      EA 2462 \\
      Universit\'e d'Artois\\
         SP18, rue Jean Souvraz\\
          62307 Lens Cedex\\
         France}
\email{martin.saraleguiaranguren@univ-artois.fr}
\author{Daniel Tanr\'e}
\address{D\'epartement de Math{\'e}matiques\\
         UMR-CNRS 8524 \\
         Universit\'e de Lille\\
         59655 Villeneuve d'Ascq Cedex\\
         France}
\email{Daniel.Tanre@univ-lille.fr}
\thanks{The first author was supported by the research project ANR-18-CE93-0002  ``OCHOTO'' . 
The third author was partially supported by 
by the Proyecto PID2020-114474GB-100
and 
the ANR-11-LABX-0007-01  ``CEMPI''}
\subjclass[2020]{57N80, 55P60, 58A35, 32S60}
\keywords{Intersection homology; Quinn spaces; Gajer spaces; Linkwise localization}
\renewcommand\l@subsection{\@tocline{2}{0pt}{2pc}{5pc}{}}
\renewcommand\l@subsubsection{\@tocline{3}{0pt}{4pc}{10pc}{}}
\dedicatory{Dedicat a la mem\'oria d'Agust\'{\i} Roig, matem\`atic, catal\`a i amic de sempre.}
\theoremstyle{plain}
\newtheorem{theorem}{Theorem}
\newtheorem{proposition}{Proposition}[section]
\newtheorem{theoremb}[proposition]{Theorem}
\newtheorem{corollary}[proposition]{Corollary}
\theoremstyle{definition}
\newtheorem{definition}[proposition]{Definition}
\newtheorem{example}[proposition]{Example}
\theoremstyle{remark}
\newtheorem{remark}[proposition]{Remark}
\numberwithin{equation}{section}
\newcommand{\secref}[1]{Section~\ref{#1}}
\newcommand{\subsecref}[1]{Subsection~\ref{#1}}
\newcommand{\thmref}[1]{Theorem~\ref{#1}}
\newcommand{\propref}[1]{Proposition~\ref{#1}}
\newcommand{\remref}[1]{Remark~\ref{#1}}
\newcommand{\defref}[1]{Definition~\ref{#1}}
\def\R{{\mathbb R}}
\def\ov{\overline}
\def\cE{{\mathcal E}}
\def\cL{{\mathcal L}}
\def\cM{{\mathcal M}}
\def\cN{{\mathcal N}}
\def\cT{{\mathcal T}}
\def\cS{{\mathcal S}}
\def\crG{{\mathscr G}}
\def\1{{\mathbf 1}}
\def\tc{{\mathtt c}}
\def\tv{{\mathtt v}}
\def\tts{{\mathtt s}}
\def\ttD{{\mathtt D}}
\def\ttS{{\mathtt S}}
\def\R{\mathbb{R}}
\def\Z{\mathbb{Z}}
\def\Map{{\rm Map}}
\def\id{{\rm id}}
\def\codim{{\rm codim\,}}
\def\sing{{\rm Sing}}
\def\tF{{\widetilde{F}}}
\def\ev{{\tt eval}}
\def\rc{{\mathring{\tc}}}
\def\menos{\backslash}
\newcounter{ejemplo}
\newcounter{figura}
\def\top{{\mathbf{Top}}}
\def\sset{{\mathbf{Sset}}}
\def\cyl{{\mathrm{cyl}}}
\def\rcyl{{\mathring{{\mathrm{cyl}}\;}}}
\def\holink{{\mathrm{holink}}}
\def\holinks{{\mathrm{holink}_{\tts}}}
\begin{document} 
\begin{abstract} 
Intersection homology of Goresky and MacPherson can be defined from the Deligne sheaf,
obtained from truncations of  complexes of sheaves.
As intersection homology is not the homology of a particular space, 
 the search for a family of spaces whose homologies have properties analogous to intersection homology 
has developed. For some stratified spaces, M. Banagl has introduced such a family by using a topological truncation:
the original link is replaced by a truncation of its homological Moore resolution. 

In this work, we study the dual approach in the Eckmann-Hilton sense : 
we consider the stratified space obtained by replacing the original link by a Postnikov approximation. 
The main result is that our construction  restores the space constructed 
by Gajer to establish an intersection  Dold-Thom theorem. 
 We are conducting this study within the general framework of Quinn's homotopically stratified spaces.
\end{abstract}


\maketitle

\section*{Introduction}
In \cite{GM1}, M. Goresky and R. MacPherson re-establish Poincar\'e duality  with rational coefficients
for some stratified spaces, called pseudomanifolds. For that, they introduce perversity functions, $\ov{p}$,
and  intersection homology defined from the Deligne sheaf, 
obtained from a succession of ad'hoc truncations of complexes of sheaves (\cite{GM2}).
This process is done at an algebraic level and not at a topological one; 
intersection homology is not the homology of a space.

\medskip
M. Banagl has developed (\cite{MR2662593}) the idea of the construction of a family of spaces, $I_{\ov{p}}X$, 
from a stratified pseudomanifold $X$ and whose homologies
 have properties analogous to intersection homology. In particular, if $D\ov{p}$ is the complementary perversity of $\ov{p}$,
a Poincar\'e duality, similar to that of  \cite{GM1,GM2}, 
is required between the (co)homologies of $I_{\ov{p}}X$ and $I_{D\ov{p}}X$.
 In \cite{MR2928934,MR3544285}, Banagl achieves this goal for 2-strata pseudomanifolds with certain requirements,
 as the existence of a flat link bundle.
 The construction of $I_{\ov{p}}X$ uses a topological truncation:
 the original link is replaced by a truncation of its homological Moore resolution. 
 If the pseudomanifold has only isolated singularities, let us also mention the work of M. Spiegel
(\cite{Spiegel})
 where a truncation of the link is performed 
 with respect to any homology theory given by a connective ring spectrum.
 In the case of the ordinary Eilenberg-MacLane spectrum, Spiegel 
recovers exactly the intersection homology.

\medskip
Here, we develop an Eckmann-Hilton dual version of Banagl's construction, $I_{\ov{p}}X$, replacing 
the truncation of the Moore  homological decomposition by 
a truncation of the Postnikov tower of the links. 
Using homotopical tools, the process fits well with the  homotopically stratified spaces introduced by F. Quinn
and we place this work in this setting, see \defref{def:quinnspace}.

\medskip
A second type of space also appears. To present it, let us  recall that intersection homology 
 is based on a choice of singular chains. This choice is made according to a perversity $\ov{p}$ and the
 chosen chains are called $p$-allowable. 
The tricky point is that the boundary of a $\ov{p}$-allowable chain is not necessarily $\ov{p}$-allowable. 
Thus, for having a chain complex, it is necessary to require the allowability of the chain and of  its boundary.
Here we do not work with chain complexes but with topological spaces or simplicial sets.
Therefore, the allowability property must be required on the simplexes and on all their faces. 
This construction has been introduced by P. Gajer in \cite{MR1404919,MR1489215}. 
From a stratified space $X$ and a perversity $\ov{p}$, Gajer gets a simplicial set $\crG_{\ov{p}}X$.

\medskip
Let's detail our main result for a Quinn's homotopically stratified space $X$ of  depth 1. 
Denote by $S$ the singular stratum of $X$
and by $\holinks(X,S)$ the stratified homotopy holink introduced by Quinn and formed of paths beginning
in $S$ which never return to $S$.  
The space $X$ is the homotopy pushout of 
$$ \xymatrix{
 X\menos S&\holinks(X,S)\ar[l]_-{\ev_{1}}\ar[r]^-{\ev_{0}}&S,
 }
 $$
where $\ev_{0}$ and $\ev_{1}$ are evaluation maps of paths. 
Let $\ov{p}$ be a perversity on $X$,
 of complementary perversity $D\ov{p}$.
As recalled in \secref{sec:truncationlink}, a Postnikov stage $P_{\ell}Y$ of a topological space $Y$ 
is a particular case of a localization. The process of localization  also exists for maps and is called fibrewise
localization. 
We denote it $ \widetilde{P}_{\ell}$.
Our main result states as follows for spaces of depth~1.

 \begin{theorem}\label{prop:gajerneighbor}
 Let $(X,\ov{p})$ be a perverse manifold homotopically stratified  space with connected links and
   two strata, $S$ and $X\menos S$.
 Then there is a  homotopy equivalence 
between the realisation of $\crG_{\ov{p}}X$ and the 
homotopy pushout of $\ev_{1}$ with the
fibrewise Postnikov $D\ov{p}(S)$-localization of $\ev_{0}\colon \holinks(X,S)\to S$.
  \end{theorem}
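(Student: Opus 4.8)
The plan is to compare both sides with the homotopy colimit of $\crG_{\ov{p}}$ over a cover of $X$ adapted to the two strata, reducing the statement to a purely local one near $S$ which, for an open cone, is exactly the assertion ``Gajer's construction on a cone is a Postnikov truncation of the link''. \textbf{Step 1 (a stratified Mayer--Vietoris for $\crG_{\ov{p}}$).} By the structure theory of manifold homotopically stratified spaces I would first fix a tame neighbourhood $N$ of $S$ and a stratified homotopy equivalence identifying $N\menos S\simeq\holinks(X,S)$ so that the inclusion $N\menos S\hookrightarrow X\menos S$ becomes $\ev_{1}$ and a deformation retraction $N\to S$ becomes $\ev_{0}$. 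Then I would prove that $\crG_{\ov{p}}$ sends the open cover $\{X\menos S,\,N\}$ of $X$ to a homotopy pushout square,
$$|\crG_{\ov{p}}X|\ \simeq\ |\crG_{\ov{p}}(X\menos S)|\ \cup_{\,|\crG_{\ov{p}}(N\menos S)|}\ |\crG_{\ov{p}}(N)|.$$
This ``excision'' property is obtained as for ordinary singular chains, via iterated barycentric subdivision and the usual prism operators, the only extra point being that \emph{hereditary} $\ov{p}$-allowability (allowability of a simplex \emph{and of all its faces}) must be shown to survive subdivision and prisms; since the allowability condition refers only to $S$ and $\codim S$, unchanged on restriction to $N$ or $X\menos S$, the cover is adapted. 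As neither $X\menos S$ nor $N\menos S$ meets the singular stratum, $\crG_{\ov{p}}$ there is just the singular simplicial set, so $|\crG_{\ov{p}}(X\menos S)|\simeq X\menos S$, $|\crG_{\ov{p}}(N\menos S)|\simeq\holinks(X,S)$, and the left gluing map is $\ev_{1}$. Everything thus reduces to identifying $|\crG_{\ov{p}}(N)|$ and the map $|\crG_{\ov{p}}(N\menos S)|\to|\crG_{\ov{p}}(N)|$.

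\textbf{Step 2 (the cone case).} The heart of the argument is $N=cL$, the open cone on a link $L$, with $S$ the cone point and
$$\ell:=D\ov{p}(S)=\codim S-2-\ov{p}(S)=\dim L-1-\ov{p}(S).$$
Writing a singular simplex $\sigma\colon\Delta^{m}\to cL$ in cone coordinates, hereditary $\ov{p}$-allowability of $\sigma$ says exactly that $\sigma^{-1}(S)\cap F\subseteq\sk_{\dim F-\ell-2}F$ for every face $F\subseteq\Delta^{m}$; in particular a simplex of $\crG_{\ov{p}}(cL)$ that actually meets $S$ must have dimension $\ge\ell+2$. Hence $\crG_{\ov{p}}(L\times(0,1])$ is precisely the sub-simplicial set of simplices missing the cone point, so the inclusion $\crG_{\ov{p}}(L\times(0,1])\hookrightarrow\crG_{\ov{p}}(cL)$ is bijective in dimensions $\le\ell+1$, whence $L\simeq|\crG_{\ov{p}}(L\times(0,1])|\to|\crG_{\ov{p}}(cL)|$ is an isomorphism on $\pi_{i}$ for $i\le\ell$ (and onto on $\pi_{\ell+1}$). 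I would then show $\pi_{i}|\crG_{\ov{p}}(cL)|=0$ for $i\ge\ell+1$: given an $i$-sphere of allowable simplices, cone it off to $S$ from the barycenter of the filling $(i+1)$-simplex; the only face of that simplex containing the barycenter is the simplex itself, of dimension $i+1\ge\ell+2$, and the cone-point preimage lands in the correct skeleton thanks to hereditary allowability on the boundary sphere. Combining these, $L\to|\crG_{\ov{p}}(cL)|$ is a Postnikov $\ell$-localization, i.e. $|\crG_{\ov{p}}(cL)|\simeq P_{\ell}L=P_{D\ov{p}(S)}L$ with the localization map as structure map.

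\textbf{Step 3 (globalising over $S$, then assembling).} For a general $N$ one has $N\simeq\cyl(\ev_{0}\colon\holinks(X,S)\to S)$, with homotopy fibre of $\ev_{0}$ over $s$ the link $L_{s}$, so Step 2 applies fibrewise. Applying the locality argument of Step 1 once more, now to a cover of the \emph{unstratified} $S$, one upgrades the cone computation to a homotopy equivalence over $S$,
$$|\crG_{\ov{p}}(N)|\ \simeq\ \widetilde{P}_{\ell}\,\holinks(X,S),$$
the total space of the fibrewise Postnikov $\ell$-stage of $\ev_{0}$ (its fibres being the $P_{\ell}L_{s}$), under which $|\crG_{\ov{p}}(N\menos S)|\simeq\holinks(X,S)\to|\crG_{\ov{p}}(N)|$ becomes the fibrewise localization map $\holinks(X,S)\to\widetilde{P}_{\ell}\,\holinks(X,S)$; connectedness of the links is what makes this fibrewise localization well behaved and the identifications coherent over $S$. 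Substituting into Step 1, $|\crG_{\ov{p}}X|$ is the homotopy pushout of
$$X\menos S\ \xleftarrow{\ \ev_{1}\ }\ \holinks(X,S)\ \longrightarrow\ \widetilde{P}_{\ell}\,\holinks(X,S),$$
which is precisely the homotopy pushout of $\ev_{1}$ with the fibrewise Postnikov $D\ov{p}(S)$-localization of $\ev_{0}$.

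\textbf{Main obstacle.} I expect the difficulty to be concentrated in the bookkeeping rather than in any conceptual leap: (a) checking in Steps 1 and 3 that hereditary allowability is stable under the subdivision and prism operators needed for excision --- this hereditary constraint, absent from ordinary intersection chains, is exactly what Gajer's construction adds and is what must be verified carefully; and (b) turning the fibrewise cone computation of Step 2 into an equivalence \emph{over} $S$ compatible with the gluing map, which requires knowing that $\crG_{\ov{p}}(N)$ is genuinely assembled, stratum-wise, from the $\crG_{\ov{p}}(cL_{s})$ and contains nothing larger.
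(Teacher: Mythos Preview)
Your three-step outline matches the paper's argument (which proves the general theorem by induction on the number of strata; the two-stratum statement is the base case): reduce to a neighbourhood $N$ of $S$ via an excision/homotopy-pushout property of $\crG_{\ov{p}}$, identify $\crG_{\ov{p}}$ of a cone with a Postnikov section of the link, and globalise over $S$. For Step~1 the paper does not use subdivision and prisms; it proves the homotopy pushout property by combining a Mayer--Vietoris isomorphism in homology with local coefficients with a stratified Van Kampen theorem for $\pi_1\crG_{\ov{p}}$. Your subdivision sketch would give the homology part but not, by itself, the $\pi_1$-isomorphism needed to conclude that the comparison map from the homotopy pushout is a weak equivalence. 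Step~2 is quoted from a companion paper; your sketch of it is essentially the right idea.

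The substantive gap is exactly your obstacle (b), and the paper fills it by a different mechanism than ``patch over a cover of $S$''. Covering $S$ does not obviously help: in a manifold homotopically stratified space $\ev_{0}$ is only a (stratified) fibration, not a locally trivial bundle, so there is no canonical local product $N|_U\simeq_s \rc L\times U$ to feed back into Step~1, and non-canonical ones do not glue. What the paper proves instead is a fibration-preservation lemma: if $f\colon E\to B$ is a stratified fibration with $B$ trivially stratified, then $\crG_{\ov{p}}f\colon\crG_{\ov{p}}E\to\sing B$ is a Kan fibration with fibre $\crG_{\ov{p}}F$. Applied to the cylinder projection $\ov{\ev}_{0}\colon\rcyl(\ev_{0})\to S$ (fibre $\rc L$) and to $\ev_{0}$ itself (fibre $L$), this yields two Kan fibrations over $\sing S$, the first with fibre $\crG_{\ov{p}}\rc L\simeq P_{D\ov{p}(S)}L$ by Step~2. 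Now the \emph{universal property} of fibrewise localisation produces a map $k\colon \widetilde{P}_{D\ov{p}(S)}\bigl(\crG_{\ov{p}}\holinks(N,S)\bigr)\to\crG_{\ov{p}}\rcyl(\ev_{0})$ over $\sing S$, compatible with the inclusion of $\crG_{\ov{p}}\holinks(N,S)$, and $k$ is a fibrewise, hence global, weak equivalence. This single global comparison replaces your patching argument and is the missing ingredient in your Step~3.
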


  For instance, if $X=\rc Y$ is the cone of apex $\tv$ on a topological space $Y$, 
  the previous construction gives  the $D\ov{p}(\tv)$-stage of the Postnikov tower of $Y$
  which is known to be a Gajer space.

\medskip
In the general case, we need a linkwise localization, obtained by induction on the depth  and 
described in \subsecref{sec:local}.
The main result is stated in \thmref{thm:gajer2strata} and, as announced, means
that the Eckmann-Hilton dual of Banagl's construction gives the Gajer space.

\medskip
\paragraph{\bf Outline of the paper}
Sections~\ref{sec:pseudo} and \ref{sec:quinn} are basic recalls on stratified objects and tools: 
Quinn's homotopicaly stratified spaces, holink, perversity,...
In \secref{sec:gajer}, we present the Gajer spaces  
(\cite{MR1404919}) already studied in \cite{CST9,CST10} and complete their properties
concerning fibrations and homotopy pushouts.
In \secref{sec:truncationlink}, we specify the concept of linkwise localization and prove the main theorem. An example is also given.

\medskip
\paragraph{\bf Notation and convention}
Let $\sset$ be the category of simplicial sets and $\top$ be the category of 
weak-Hausdorff compactly generated spaces (\cite{MR251719}).
We denote by $\sing\colon \top\to\sset$ the functor given by the singular chains
and by $|-|\colon \sset\to \top$ the realisation functor.
We use the  notation $\Delta[n]$, $\partial \Delta[n]$, $\Lambda[n,j]$  in the simplicial case
and
$\Delta^n$, $\partial \Delta^n$,  $\Lambda^n_{j}$ for their associated polyhedron. 
A manifold is supposed to be a connected, separable metric space. 
All path spaces are given the compact-open topology. 

\medskip
We are grateful to the referee for 
her/his comments and suggestions which helped us to improve 
the manuscript.

%
\section{Perversity on stratified spaces}\label{sec:pseudo}
Among the first structures adapted to singularities, there is the complex of manifolds of Whitney (\cite{MR19306}):
an amalgamation of manifolds of different dimensions. 
This concept  led  to the notions of Whitney and Thom-Mather stratified spaces.
In their pioneer works (\cite{GM1,GM2}), Goresky and MacPherson use \emph{pseudomanifolds}.
Here we are mainly concerned with the homotopically stratified spaces of Quinn (\cite{Qui}), 
recalled in \secref{sec:quinn}.

\medskip
 We denote by $\top$ the category of weak-Hausdorff compactly generated spaces (henceforth called ``space'')
with morphisms the continuous maps (henceforth called ``map''), 
see   \cite[Section 2]{MR251719}. 
This category verifies the conditions required by Hirschhorn in \cite[Section 1.1.1]{MR1944041}.
Let us now enter in the stratified world. 

\begin{definition}\label{def:filtered}
A \emph{filtered space} is a space, $X$, endowed with a filtration
$$
X^{-1}=\emptyset\subseteq X^{0} \subseteq X^{1} \subseteq 
\dots \subseteq X^{n-2} \subseteq X^{n-1} \subseteq X^{n} =X,
$$
by closed subsets. 
Such a filtration gives a partition of $X$ by path-connected, locally closed subspaces
defined as the non-empty path-components of $X_{i}=X^{i}\menos X^{i-1}$
and called \emph{strata}.
The set of strata is denoted by $\cS_{X}$ (or  $\cS$ if there is no ambiguity).
A \emph{stratified space} is a filtered space whose set of strata satisfies the \emph{Frontier condition:} 
$$S_{i}\cap \ov{S_{j}}\neq \emptyset\quad \text{implies}\quad S_{i}\subset \ov{S_{j}}.$$
\end{definition}

Each stratum of a filtered space has  a \emph{formal dimension} given by
$\dim S=i$ if  $S\subset X^{i}$. As well, the formal codimension is $\codim S=n-i$.
In a stratified space, the set $\cS$ of strata is a poset for the relation $S_{i}\preceq S_{j}$ if $S_{i}\subset \ov{S_{j}}$.
The maximal elements of $\cS$ are called \emph{regular} and we say \emph{bottom stratum} for a minimal one.

\begin{definition}\label{def:startifiedmap}
A \emph{stratified map} $f\colon X\to Y$ between two stratified spaces is a continuous map such that
for each stratum $S$ of $X$ there is a stratum $S^f$ of $Y$ verifying $f(S)\subset S^f$.
\end{definition}

 Let $X$ be a stratified space. A map $F\colon Z\times A\to X$ 
is \emph{stratum-preserving along $A$} if $F(\{z\}\times A)$ lies in a single stratum of $X$, for any $z\in Z$. 
If $A=I$, we say that $F$ is a  \emph{stratum-preserving homotopy} 
and denote by $\sim$ the associated equivalence relation.
A map $F\colon Z\times I\to X$ whose restriction to $Z\times [0,1[$ is stratum-preserving along $[0,1[$ is called
a \emph{nearly stratum-preserving homotopy.}

\begin{definition}\label{def:homotopyequistrat}
Two stratified spaces, $X$ and $Y$, are \emph{stratified homotopy equivalent} 
if there exist stratified maps $f\colon X\to Y$ and $g\colon Y\to X$
and stratum-preserving homotopies:
$f\circ g\sim \id_{Y}$ and $g\circ f\sim \id_{X}$.
We denote this relation  $X\simeq_{s} Y$.
\end{definition}

Let us  recall  a notion of fibration adapted to stratified spaces, see \cite[Definition 5.1]{MR1686706}.

\begin{definition}\label{def:stratfibration}
Let $X$ and $Y$  be stratified spaces. 
A   map $f\colon X\to Y$   is a \emph{stratified fibration} provided given any space $Z$ and any commuting diagram,
\begin{equation}\label{equa:lifting}\xymatrix{
Z\ar[rr]^-g\ar[d]_{i_{0}}
&& X\ar[d]^f\\
Z\times [0,1]\ar[rr]^{F}\ar@{-->}[rru]^-{\tF}
&&Y,
}
\end{equation}
with $F$ a stratum-preserving homotopy, there exists
a stratum-preserving homotopy, $\tF$, such that $f\circ \tF=F$ and $\tF(z,0)=g(z)$ for each $z\in Z$.
\end{definition}

The  mapping cylinder of a map
$f\colon X\to Y$ is endowed with the \emph{teardrop topology.} 
As set, the  mapping cylinder of   $f$   is the quotient      
$\cyl \,f= (X\times [0,1])\sqcup Y/\sim$ for the relation $(x,1)\sim f(x)$ for $x\in X$.
The teardrop topology on $\cyl \,f$ is defined as  the minimal topology such that:
\begin{enumerate}
\item  the inclusion $X\times [0,1[\to \cyl f$ is an open embedding, 
\item the map $c\colon \cyl \,f\to Y\times [0,1]$, defined by
$c(x,t)=(f(x),t)$ if $(x,t)\in X\times [0,1[$ and
$c(y)=(y,1)$ if $y\in Y$, is continuous.
\end{enumerate}
If  $f$ is a proper map between locally compact Hausdorff spaces,  the teardrop topology is the usual quotient  topology.
We send the reader to \cite{MR1695351,MR1763954}, \cite[Page 138-139]{MR1410261}
 for  basic properties of teardrop topology.

\medskip
Let $f\colon X\to Y$ be a map (not necessarily stratified) between stratified spaces.
 The \emph{strata of the mapping cylinder} are the strata of $Y$ and the products
$S\times [0,1[$ where $S$ is a stratum of $X$.
The \emph{open mapping cylinder} is the subspace $\mathring{\cyl} \,f=\cyl \,f \menos (X\times \{0\}$. 
Homotopy pushouts of two maps with the same domain being double mapping cylinders, they are also equipped with teardrop topology.

\medskip
Intersection homology of Goresky and MacPherson is defined from a parameter, called perversity. 

\begin{definition}\label{def:perversitegen}
A \emph{perversity on a filtered space, $X$,} is a map $\ov{p}\colon \cS_{X}\to \ov{\Z}={\Z}\cup \{\pm\infty\}$
taking the value 0 on the regular strata. The pair $(X,\ov{p})$
is called a \emph{perverse space.}
The \emph{top perversity}  $\ov t$ is
 defined by $\ov{t}(S)=\codim S-2$, for a singular stratum $S$. 
 Given a perversity $\ov p$ on $X$,  the \emph{complementary perversity} on $X$, $D\ov{p}$, is characterized 
 by $D\ov{p}(S)+\ov{p}(S)=\ov{t}(S)$, for any singular stratum $S$.
\end{definition}

\begin{definition}\label{def:backperversity}
Let $f\colon X\to Y$ be a stratified map and $\ov{p}$ be a perversity on $Y$. The \emph{pullback perversity of $\ov{p}$ by $f$}
is the perversity $f^*\ov{p}$ on $X$ defined on any singular stratum $S$ of $X$ 
by $f^*\ov{p}(S)=\ov{p}(S^f)$, 
where $S^f$ is the stratum of $Y$ containing $f(S)$.
In the case of the canonical injection of an open subset endowed with the induced filtration, $\iota\colon U\to Y$, 
we still denote by $\ov{p}$ the perversity $\iota^*\ov{p}$ and call it the \emph{induced perversity.}
\end{definition}

A perversity $\ov{p}$ allows a selection among the singular simplexes of a filtered space.

\begin{definition}\label{def:homotopygeom}
Let $(X,\ov{p})$  be a  perverse  space.
A simplex $\sigma\colon \Delta\to X$ is  
\emph{$\ov{p}$-allowable}  
if, for each singular stratum $S$, the set $\sigma^{-1}S$ verifies
\begin{equation}\label{equa:admissibleST}
\dim \sigma^{-1}S\leq \dim\Delta-\codim S +\ov{p}(S)=\dim\Delta-2-D\ov{p}(S),
\end{equation}
with the convention $\dim\emptyset=-\infty$.
\end{definition}

For this definition to have meaning, we need to specify the notion of dimension for the subspace 
$\sigma^{-1}S$ of a Euclidean  simplex.
There is some flexibility  and as King wrote in \cite{King}, any reasonable notion of dimension  gives the original 
intersection homology of \cite{GM1}.  
Here, we choose the following one, introduced by Gajer (\cite{MR1404919}) and 
revisited in \cite{CST9}.
 
\begin{definition}\label{def:dimension}
A subspace $A\subset \Delta$ of a Euclidean simplex is of \emph{polyhedral dimension} less than or equal to $\ell$
 if  $A$ is included in a polyhedron $Q$ with $\dim Q\leq \ell$. 
\end{definition}

As a face of a $\ov{p}$-allowable simplex is not necessarily $\ov{p}$-allowable, 
for having a simplicial set, we will strengthen the notion of $\ov{p}$-allowability  in \defref{def:gajersimplicialset}.

\section{Quinn's homotopically stratified spaces}\label{sec:quinn}

In this section, we recall the singular spaces introduced by F. Quinn in \cite{MR873296,Qui}. 
Allowing a study of stratified spaces with homotopical tools, there is
an extensive literature on their properties and applications.

\medskip
Let $M$ be a differentiable manifold.
In \cite{MR71081}, 
Nash defines a topological space $P_{\cN}(M)$ as the set of continuous map,
$\alpha\colon [0,1]\to M$ such that $\alpha(t)\neq \alpha(0)$ for all $t>0$, endowed with the compact-open topology.
Using a Riemanian metric, Nash proves that the tangent bundle of $M$ is a fibre deformation retract of $P_{\cN}(M)$. 
This result gives an alternative proof to the following theorem  of Thom (\cite{MR0054960}): 
the fibre homotopy type of the tangent bundle of  $M$ depends only on the topology of $M$, 
which infers the topological invariance of the Stiefel-Whitney classes.

\smallskip
In \cite{MR179795}, Fadell extends Nash's definition to obtain a topological analogue of the normal bundle 
of a locally flat $n$-dimensional topological manifold $S$ 
in a $(n+k)$-dimensional topological manifold $M^{n+k}$. 
Recall that $S$ is locally flat in $M$ if  each point of $S$ admits an open neighborhood $U$ in $S$
and there is an open embedding $h\colon U\times \R^k\to M$ such that $h(x,0)=x$ for all $x\in U$.
In analogy with $P_{\cN}(M)$, Fadell defines the space $\cE^0$ as the space of paths in $M$ 
which start on $S$ and never return in $S$, 
$$\cE^0=\{\omega\colon[0,1]\to M\mid \omega(t)\in S \text{ if, and only if, } t=0\},
$$
and the space $\cE$ which is the union of $\cE^0$ with the constant paths in $S$. 
Fadell proves that the evaluation in $t=0$, 
$\ev_{0}\colon (\cE,\cE^0)\to S$, gives a pair of locally trivial fibre spaces, thus of Hurewicz fibrations 
since the base $S$ is required to be paracompact in \cite{MR179795}. 
Moreover, the fibres of this pair are homotopy equivalent to $(\R^k,\R^{k}\menos 0)$.
Fadell also shows that the Whitney sum of $(\cE,\cE^0)$ with the Nash bundle
of $S$ is the restriction to $S$ of the Nash bundle of $M$.
This naturally leads to call  the pair $(\cE,\cE^0)$ the normal fibre space for  the inclusion $S\subset M$.  

\smallskip
In the case of Whitney spaces (\cite{MR19306}),
for each stratum $S$ there is a bundle over $S$ whose fibre is the link. 
This bundle comes from the existence of a tubular neighborhood (\cite{MR239613}).
In \cite{Qui}, Quinn develops a family of stratified spaces for which the total space can be recovered from a succession
of ``homotopical amalgamations of fibrations''.  
Most of the upcoming recalls are in the  work of Quinn (\cite{MR873296,Qui}). 
The first point is the stratified version of the Fadell normal bundle.

\begin{definition}\label{def:holink}
Let $X$ be a stratified space and $Y\subset X$. 
The \emph{homotopy link} (or \emph{holink}) of $Y$ in $X$ is the  space
$$\holink(X,Y)=\left\{\omega\colon [0,1]\to X\mid \omega(0)\in Y
\text{ and } \omega(t)\in X\menos Y \text{ for }
t\in ]0,1]
\right\}.$$
The \emph{stratified homotopy link} is a subspace of $\holink(X,Y)$ whose elements lie in a single stratum 
after leaving $Y$, i.e.,
$$\holinks(X,Y)=\left\{\omega\in \holink(X,Y)\mid \text{for some } S_{i}\in \cS_{X}, \;\omega(]0,1])\subset S_{i}
\right\}.$$ 
The evaluation at $0$ defines  maps $\ev_{0}\colon \holink(X,Y)\to Y$
and $\ev_{0}\colon \holinks(X,Y)\to Y$.
The stratified homotopy link is naturally filtered by
$$\holinks(X,Y)^{j}=\left\{\omega\in\holinks(X,Y)\mid \omega(1)\in X^{j}\right\}.$$
Let $S\in \cS_{X}$. The \emph{local holink} of $x_{0}\in S$, $\holinks(X,x_{0})$, is the fibre at $x_{0}$ of the map
$\ev_{0}\colon \holinks(X,S)\to S$ .
\end{definition}

\begin{definition}\label{def:tame}
{(\cite[Definitions 3.1 and 3.4]{MR1686706})}
The subspace $Y$ of a  space $X$ is  \emph{forward tame} in $X$ if there is a 
neighborhood $N$ of $Y$ in $X$ and a homotopy,
$h\colon N\times I\to X$, 
such that $h(-,0)$ is the inclusion $N\hookrightarrow X$,
the restriction $h(-,t)\colon Y\to X$ is the inclusion $Y\hookrightarrow X$ for each $t\in I$,
$h(N,1)=Y$ and $h((N\menos Y)\times [0,1[)\subset X\menos Y$.

The subspace $Y$ of a  stratified space $X$ is  \emph{stratified forward tame} in $X$ if there is a 
neighborhood $N$ of $Y$ in $X$ and a homotopy,
$h\colon N\times I\to X$, 
such that $h(-,0)$ is the inclusion $N\hookrightarrow X$,
the restriction $h(-,t)\colon Y\to X$ is the inclusion $Y\hookrightarrow X$ for each $t\in I$,
$h(N,1)=Y$ and $h((N\menos Y)\times [0,1[)\subset X\menos Y$
is stratum-preserving along $[0,1[$.
The map $h$ is called a \emph{nearly stratum-preserving strong deformation retraction} of $N$ to $X$.
\end{definition}

\begin{definition}\label{def:quinnspace}{(\cite{Qui})}
A stratified metric space $X$ is a \emph{homotopically stratified space} 
if the following two conditions are satisfied for every pair of strata with $S\preceq S'$,
\begin{enumerate}[i)]
\item $S$ is forward tame in $S\cup S'$,
\item  the evaluation map $\ev_{0}\colon \holink(S\cup S',S)\to S$ is a  fibration. 
\end{enumerate}
If, moreover, each stratum is a manifold without boundary and
is locally-closed in $X$, we say that $X$ is a 
\emph{manifold homotopically stratified space.}
\end{definition}

Whitney spaces  are typical examples of homotopically stratified spaces.  
Quinn also proves that a filtered space, with locally contractible skeleta  and  with conical neighborhoods
 (up to a stratified homotopy equivalence), is a homotopically stratified space. 
Strata and  holink spaces suffice to determine the stratified homotopy type of a 
homotopically stratified space (\cite[Lemma 2.4]{Qui})
and  to detect stratified homotopy equivalences.

\medskip
We list properties of homotopically stratified spaces, already present in the work of Quinn
(\cite{Qui}).

\begin{definition}\label{def:pure}
A subspace $Y$ of a stratified space is said \emph{pure} if it is closed and a union of strata.
\end{definition}

\begin{proposition}{\rm (\cite[Theorem 6.3 and Corollary 6.2]{MR1686706})}\label{prop:hughesneighbor}
Let $X$ be a homotopically stratified  space with a finite number of strata
and $Y\subset X$ be a pure subspace.
Then $Y$ is stratified forward tame in $X$ and the map
$\ev_{0}\colon \holinks(X,Y) \to  Y$ is a stratified fibration.
\end{proposition}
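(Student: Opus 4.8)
The statement is due to Hughes, and my plan is to reconstruct the skeleton of his \emph{approximate tubular neighborhood theorem}. Both conclusions — that $Y$ is stratified forward tame in $X$, and that $\ev_{0}\colon\holinks(X,Y)\to Y$ has the stratum-preserving homotopy lifting property — would be established together, by induction on the (finite) number of strata of $X$, the two-strata case being exactly conditions (i) and (ii) of Definition~\ref{def:quinnspace}. One easy reduction comes first: if $S_{0}$ is a minimal stratum of $X$ (hence closed, by the frontier condition) with $S_{0}\cap Y=\emptyset$, then $S_{0}$ and $Y$ are disjoint closed subsets of the metric space $X$, no path of $\holinks(X,Y)$ meets $S_{0}$, a neighborhood of $Y$ lies in the open set $X\menos S_{0}$, and the statement for $(X,Y)$ follows at once from the inductive hypothesis applied to the homotopically stratified space $(X\menos S_{0},Y)$. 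So, at each stage of the induction, one may assume that every minimal stratum of $X$ lies in $Y$.

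The inductive step is then the real work: peel off a minimal stratum $R$, which now lies in $Y$; the inductive hypothesis handles the pure subspace $Y\menos R$ of $X\menos R$, and one must re-incorporate $R$. The local input here is immediate. For each stratum $S'$ of $X$ lying above $R$, the subspace $R\cup S'$ with its induced filtration is a two-strata homotopically stratified space (the conditions of Definition~\ref{def:quinnspace} for the single pair $R\preceq S'$ are inherited from $X$), so Quinn's axiom~(i) furnishes a nearly stratum-preserving strong deformation retraction of a neighborhood of $R$ in $R\cup S'$ onto $R$, while axiom~(ii) says $\ev_{0}\colon\holink(R\cup S',R)\to R$ is a Hurewicz fibration. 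The substance of the proof is to \emph{assemble} these finitely many strata-local pieces — together with the data already obtained on $X\menos R$ — into a single neighborhood $N$ of $Y$ in $X$ carrying one nearly stratum-preserving strong deformation retraction onto $Y$, and into the single map $\ev_{0}\colon\holinks(X,Y)\to Y$ being a stratified fibration.

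What makes this assembly nontrivial — and what forces one to import Hughes's construction rather than patch things by hand — is that the strata of $X$ above $R$ need not be open, so $\ev_{0}^{-1}(R)$ is \emph{not} the topological disjoint union of the pieces $\holink(R\cup S',R)$: a uniform limit of paths whose tail lies in a stratum $S'$ can have its tail in a stratum strictly below $S'$. Controlling this requires the teardrop topology on the relevant mapping cylinders together with the ambient metric, used to make the successive, strata-by-strata extensions converge; this is precisely the content of \cite[Theorem~6.3 and Corollary~6.2]{MR1686706}, and is the step I expect to be the main obstacle. Once the assembly is in hand, the stratified forward tameness of $Y$ and the stratified fibration property of $\ev_{0}$ follow, and the induction propagates them from the single-stratum case to an arbitrary pure subspace.
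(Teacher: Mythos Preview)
The paper does not prove this proposition at all: it is stated purely as a citation of Hughes's results \cite[Theorem~6.3 and Corollary~6.2]{MR1686706}, with no argument supplied. Your proposal therefore already goes further than the paper, by outlining the inductive skeleton of Hughes's original proof rather than simply invoking it.

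As an outline your sketch is sound. The base case (two strata, $Y$ the bottom one) does reduce to conditions (i) and (ii) of \defref{def:quinnspace}, since with a single stratum above $Y$ ``forward tame'' and ``stratified forward tame'' coincide and a Hurewicz fibration over a one-stratum base is automatically stratified. The reduction that discards a minimal stratum $S_{0}\not\subset Y$ is also correct: $S_{0}$ is closed, $X\menos S_{0}$ is an open neighborhood of $Y$ which is again homotopically stratified, and no path of $\holinks(X,Y)$ can leave $Y$ into $S_{0}$ because $\overline{S_{0}}=S_{0}$ is disjoint from $Y$, so $\holinks(X,Y)=\holinks(X\menos S_{0},Y)$. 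You are equally right that the genuine content lies in the assembly step, and honest in saying you would import it rather than reprove it. One terminological slip: what you call ``his approximate tubular neighborhood theorem'' is a distinct, later result of Hughes \cite{MR1954237}; the arguments actually cited here are those of \cite{MR1686706}, and it is that paper's machinery (stratified path spaces, controlled lifting in the metric setting) that carries the assembly, not the approximate tubular neighborhood theorem.
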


If a nearly stratum-preserving deformation retraction  of a neighborhood $N$ of a pure subset $Y$ exists
as in the previous proposition, the neighborhood $N$ can be replaced by the cylinder of an evaluation map.
 In the proof given in \cite[Lemma 2.4]{Qui}, some evaluation maps  turn out not to be continuous.
 Friedman has fixed it in  \cite[Appendix]{MR2009092}. 
We quote the particular case that we need.

\begin{proposition}{\rm (\cite[Proposition A.1]{MR2009092})}\label{prop:BruceGreg}
Let $X$ be a manifold homotopically stratified  space and $S$ be a bottom stratum.
Given a nearly stratum-preserving deformation retraction 
$h\colon N\times I \to N$ of a neighborhood $N$ of $S$, to $S$,
then $N$ is stratum-preserving homotopy equivalent to the mapping cylinder 
of the map $\ev_{0}\colon \holinks(N,S)\to S$.
\end{proposition}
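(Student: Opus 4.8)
The plan is to build, directly out of the deformation retraction $h$, a stratified map $\Psi\colon N\to\cyl(\ev_{0}\colon\holinks(N,S)\to S)$ and then to recognise $\Psi$ as a stratified homotopy equivalence through the holink criterion of \cite[Lemma 2.4]{Qui} — that a stratified map between homotopically stratified spaces inducing homotopy equivalences on all strata and all stratified holinks is a stratified homotopy equivalence — rather than by writing an inverse by hand. Working in this direction is essential: the ``naive'' evaluation map $\cyl(\ev_{0})\to N$, $(\omega,t)\mapsto\omega(1-t)$, is in general \emph{not} continuous for the teardrop topology at the points of $S$, since a net converging to a point of $S$ in the cylinder controls only $\omega(0)$ and $t$, not the behaviour of $\omega$ near $0$; this discontinuity is exactly the gap in Quinn's proof that \cite[Appendix]{MR2009092} repairs. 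Before starting I would record that, $S$ being a bottom and hence pure stratum, \propref{prop:hughesneighbor} makes $\ev_{0}\colon\holinks(N,S)\to S$ a stratified fibration, so that its teardrop mapping cylinder $\cyl(\ev_{0})$ is itself a (manifold) homotopically stratified space and the holink criterion applies to it.

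\emph{The map $\Psi$.} I would set $\Psi|_{S}=\id_{S}$ and, for $x\in N\menos S$, $\Psi(x)=(\gamma_{x},\rho(x))\in\holinks(N,S)\times[0,1)$, where $\gamma_{x}(t)=h(x,1-t)$ is the reversed retraction path and $\rho\colon N\menos S\to[0,1)$ is any continuous function with $\rho(x)\to1$ as $x\to S$, say $\rho(x)=1-\min(1,\dist(x,S))$. There are three routine points. First, $\gamma_{x}\in\holinks(N,S)$: it stays in $N$ because $h$ maps into $N$, it never re-enters $S$ because $h((N\menos S)\times[0,1[)\subset X\menos S$, and it eventually lies in a single stratum because $h$ is nearly stratum-preserving. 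Second, $\Psi$ is stratified: on a stratum $T$ of $N\menos S$ the paths $\gamma_{x}$ depend continuously on $x$ and keep their endpoint $\gamma_{x}(1)=x$ in $T$, so, $T$ being path-connected, they remain in one stratum $T'$ of $\holinks(N,S)$ and $\Psi(T)\subset T'\times[0,1)$. Third, and crucially, $\Psi$ is continuous at the points $y\in S$: a basis of teardrop neighbourhoods of $y$ is formed by the sets $c^{-1}(U\times(a,1])$, and for $x\to y$ one has $c(\Psi(x))=(\gamma_{x}(0),\rho(x))=(h(x,1),\rho(x))\to(y,1)$ by continuity of $h$ and the choice of $\rho$.

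\emph{$\Psi$ is a stratified homotopy equivalence.} By the criterion it is enough to check this stratum by stratum and on the stratified holinks. On $S$, $\Psi$ is the identity. On a stratum $T$ of $N\menos S$, retracting the $[0,1)$-factor to a point reduces $\Psi|_{T}$ to $x\mapsto\gamma_{x}$ onto the corresponding stratum $T'$ of $\holinks(N,S)$; the endpoint evaluation $\omega\mapsto\omega(1)$ is a homotopy inverse, one composite being the identity because $\gamma_{x}(1)=x$ and the other being homotopic to $\id_{T'}$ through a deformation $\omega\rightsquigarrow\gamma_{\omega(1)}$ inside $\holinks(N,S)$ obtained from $h$ together with the homotopy lifting property of the stratified fibration $\ev_{0}$. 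For the holinks I would compose $\Psi_{*}$ with the canonical equivalence $\holinks(\cyl(\ev_{0}),S)\simeq\holinks(N,S)$ — the stratified holink of a teardrop mapping cylinder, taken along the base, recovers the domain of the map — and verify, again using the contraction $h$, that the resulting self-map of $\holinks(N,S)$ is homotopic to the identity.

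\emph{Main obstacle.} The hard part will be the teardrop topology near $S$: it is what prevents a direct construction of a stratum-preserving inverse and forces the detour through $\Psi$ (whose continuity at $S$ is automatic) and the holink criterion, and it is also where care is needed in the holink comparison and in transporting the homotopies produced by $h$. This is precisely the point corrected in \cite[Appendix]{MR2009092}, and the stratified fibration property furnished by \propref{prop:hughesneighbor} is exactly what makes both the target cylinder well-behaved and the stratum- and holink-level equivalences available.
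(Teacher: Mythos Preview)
The paper does not supply its own proof of this proposition; it is simply quoted from \cite[Proposition A.1]{MR2009092}, as the attribution in the statement indicates. There is therefore nothing in the present paper to compare your proposal against.

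That said, your outline is a plausible reconstruction and correctly isolates the essential difficulty: the naive evaluation map $\cyl(\ev_{0})\to N$ is discontinuous at $S$ in the teardrop topology, so one must work in the other direction. Your map $\Psi$ is well chosen and your continuity argument at $S$ is correct. Two places would need more care in a full proof. First, applying Quinn's holink criterion requires that $\cyl(\ev_{0})$ with the teardrop topology is itself a homotopically stratified space; this does not follow immediately from $\ev_{0}$ being a stratified fibration and needs an appeal to Hughes's results on stratifications of mapping cylinders \cite{MR1695351}. Second, the stratum-level and holink-level equivalences are only sketched: the claimed deformation $\omega\rightsquigarrow\gamma_{\omega(1)}$ inside $\holinks(N,S)$ and the identification of $\holinks(\cyl(\ev_{0}),S)$ with $\holinks(N,S)$ both require explicit constructions that stay inside the stratified holink, and this is exactly where the nearly-stratum-preserving hypothesis on $h$ must be used carefully. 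Friedman's published argument is in fact more hands-on --- he builds explicit stratified maps and homotopies rather than invoking the criterion --- but your route is a legitimate alternative provided these points are filled in.
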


\begin{remark}{(\cite[Page 49]{MR2354985})}\label{rem:pairscylinders}
With the notation of \propref{prop:BruceGreg}, denote  $M$ the mapping cylinder of the map
$\ev_{0}\colon \holinks(N,S)\to S$ and  $\cM$ the mapping cylinder of
$\ev_{0}\colon \holinks(X,S)\to S$. 
Recall that $\holinks(X,S)\simeq_{s}\holinks(N,S)$ 
since they are stratified homotopy equivalent to the stratified holink of ``small paths''.
There also exist stratum-preserving homotopy equivalences of pairs
$$
(N,N\menos S)\simeq_{s} (M,M\menos S) \simeq_{s} (M,\holinks(N,S))\simeq_{s} (\cM,\holinks(X,S))\simeq_{s}
(\cM,\cM\menos S).
$$
\end{remark}

\begin{corollary}
Let $X$ be a manifold homotopically stratified  space and $S$ be a bottom stratum.
Then $X$ is the homotopy pushout of
$$
\xymatrix{
S&\holinks(X,S)\ar[l]_-{\ev_{0}}\ar[r]^-{\ev_{1}}&X\menos S.
}$$
\end{corollary}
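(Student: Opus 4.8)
The plan is to assemble the corollary from the ingredients that precede it, essentially by rewriting the mapping cylinder description of a neighborhood as a homotopy pushout. First I would recall that, since $S$ is a bottom stratum of the manifold homotopically stratified space $X$, it is a pure subspace, so \propref{prop:hughesneighbor} applies: $S$ is stratified forward tame in $X$ and $\ev_{0}\colon \holinks(X,S)\to S$ is a stratified fibration. Forward tameness furnishes a neighborhood $N$ of $S$ together with a nearly stratum-preserving strong deformation retraction $h\colon N\times I\to N$ of $N$ to $S$, so \propref{prop:BruceGreg} gives a stratum-preserving homotopy equivalence between $N$ and the mapping cylinder $M=\cyl(\ev_{0}\colon \holinks(N,S)\to S)$. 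Combining with \remref{rem:pairscylinders}, we obtain stratum-preserving homotopy equivalences of pairs $(N,N\menos S)\simeq_{s}(\cM,\holinks(X,S))\simeq_{s}(\cM,\cM\menos S)$, where $\cM=\cyl(\ev_{0}\colon \holinks(X,S)\to S)$.

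Next I would decompose $X$ along the closed cover $\{X\menos S,\ N\}$, or more precisely along the pair $(\ov{N},X\menos S)$ whose intersection deformation retracts onto $\holinks(X,S)$ (via the $t\mapsto \ev_{t}$ flow of paths near $S$), and identify the homotopy pushout of $X\menos S\leftarrow \ov{N}\menos S\to \ov{N}$. Using the equivalence $\ov N\simeq_{s}\cM$ relative to $\ov N\menos S\simeq_{s}\cM\menos S\simeq_{s}\holinks(X,S)$ from \remref{rem:pairscylinders}, this homotopy pushout is equivalent to the homotopy pushout of
$$
\xymatrix{
X\menos S & \holinks(X,S)\ar[l]_-{\ev_{1}}\ar[r]^-{j}& \cM,
}
$$
where $j$ is the inclusion of $\holinks(X,S)=\cM\menos S$ as the "end" $\holinks(X,S)\times\{0\}$ of the cylinder $\cM$. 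But $\cM$ deformation retracts onto its base $S$ along the cylinder coordinate, and under this retraction $j$ becomes $\ev_{0}\colon \holinks(X,S)\to S$; since $\holinks(X,S)\hookrightarrow\cM$ is a cofibration, replacing $\cM$ by $S$ along this homotopy equivalence does not change the homotopy pushout. Hence the homotopy pushout in question is the one asserted in the statement.

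The one genuine point requiring care — and the step I expect to be the main obstacle — is the gluing argument: passing from the local statement "a neighborhood $\ov N$ of $S$ is $\cM$, relative to its complement of $S$" to the global statement "$X$ is the homotopy pushout $X\menos S\cup_{\holinks(X,S)}\cM$." This needs that $X$ is the homotopy pushout of $X\menos S\leftarrow\ov N\menos S\to\ov N$, which holds because $\{X\menos S,\ov N\}$ is an excisive-type cover (e.g. $\ov N$ is a genuine neighborhood so $X=\Int\ov N\cup(X\menos S)$ up to the obvious homotopies) and $\ov N\menos S\hookrightarrow X\menos S$ is a cofibration in the teardrop setting; one then replaces the square $(X\menos S,\ov N\menos S,\ov N)$ by the homotopy-equivalent square $(X\menos S,\holinks(X,S),\cM)$ using that all the maps involved are cofibrations, so homotopy pushouts are preserved. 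Here one invokes that $\top$ satisfies the conditions of Hirschhorn recalled after \defref{def:filtered}, so that these homotopy colimit manipulations are legitimate. Once the gluing is justified, the rest is the formal cylinder collapse $\cM\simeq S$ described above, which identifies the two legs of the pushout as $\ev_{1}$ and $\ev_{0}$ on $\holinks(X,S)$.
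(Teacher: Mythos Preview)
Your approach is essentially the same as the paper's: obtain a nearly stratum-preserving deformation retract neighborhood $N$ of $S$, identify $N$ with the mapping cylinder of $\ev_{0}$ via \propref{prop:BruceGreg}, and use $\holinks(N,S)\simeq_{s}\holinks(X,S)$ to replace $N$ by $S$ in the decomposition of $X$. The paper's proof is a one-sentence citation of these three ingredients (invoking \cite[Theorem 7.1]{MR1954237} rather than \propref{prop:hughesneighbor} for the neighborhood, which avoids the finiteness hypothesis on strata), leaving the gluing you spell out entirely implicit; your expanded version is correct but more elaborate than what the paper records.
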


\begin{proof}
This is a consequence of the existence of a nearly stratum-preserving deformation retract neighborhood,
$N$,  of  $S$ in $X$, 
(\cite[Theorem 7.1]{MR1954237}), 
of \propref{prop:BruceGreg} and of a stratified homotopy equivalence 
$\holink_{s}(N,S)\simeq_{s}\holink_{s}(X,S)$.
\end{proof}

\section{Gajer simplicial set}\label{sec:gajer}

The notion of $\ov{p}$-allowable simplexes (\defref{def:homotopygeom}) is used  
by Gajer (\cite{MR1404919}) for the definition of a simplicial set as follows.

\begin{definition}\label{def:gajersimplicialset}
 Let $(X,\ov{p})$ be a  perverse space.
 A simplex $\sigma\colon \Delta^{\ell}\to X$ is  \emph{$\ov{p}$-full}  if
$\sigma$ and all its faces 
 are $\ov{p}$-allowable.
 \end{definition}

The set of $\ov{p}$-full simplexes is a simplicial set
verifying the Kan condition (\cite[Page 946]{MR1404919} or \cite[Proposition 2.3]{CST10}). 
We  denote it by $\crG_{\ov{p}}X$ and call it
 the \emph{Gajer simplicial set} associated to $(X,\ov{p})$.
In \cite{CST10}, we define the $\ov{p}$-intersection homotopy groups of a perverse space $(X,\ov{p})$
as the homotopy groups of $\crG_{\ov{p}}X$ and prove  a Hurewicz theorem linking these groups to the 
$\ov{p}$-intersection homology groups. 
We also show that they verify a Van Kampen theorem relatively to the open covers of $X$ and prove 
their topological invariance if the regular parts of $X$ and of its intrinsic stratification coincide. 
Let us  recall and establish some other results on  the spaces $\crG_{\ov{p}}X$.

 \smallskip
 Any stratified map $f \colon (X,\ov{p}) \to (Y,\ov{q})$ between perverse  spaces
 such that
$f^*D\ov{q}\leq D\ov{p}$ induces a simplicial map 
$\crG_{\ov{p},\ov{q}}f\colon \crG_{\ov{p}}X\to \crG_{\ov{q}}Y$.
Moreover, if  $\varphi\colon (X\times [0,1],\ov{p})\to (Y,\ov{q})$ is a  stratified homotopy between two stratified maps
$f,\,g\colon (X,\ov{p})\to (Y,\ov{q})$ with $f^*D\ov{q}\leq D\ov{p}$, then we also have $g^*D\ov{q}\leq D\ov{p}$
and the simplicial maps, $\crG_{\ov{p},\ov{q}}f$ and $\crG_{\ov{p},\ov{q}}g$ are homotopic
(\cite[Proposition 2.5]{CST10}).

\begin{proposition}{\rm (\cite[Corollary 2.6]{CST10})}\label{prop:gajerequiv}
 Let $f\colon (X,\ov{p})\to (Y,\ov{q})$ be a stratified homotopy equivalence between perverse stratified spaces
 such that $f^*D\ov{q}= D\ov{p}$.
 Then, the assignment $\sigma\mapsto f\circ \sigma$ induces a homotopy equivalence between
 $\crG_{\ov{p}}X$ and $\crG_{\ov{q}}Y$.
  \end{proposition}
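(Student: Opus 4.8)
The plan is to build an explicit simplicial homotopy inverse out of the stratified homotopy equivalence data and then invoke the homotopy invariance of $\crG$ recalled just above (i.e. \cite[Proposition 2.5]{CST10}). So let $g\colon Y\to X$ be a stratified map together with stratum-preserving homotopies $\varphi\colon f\circ g\sim\id_{Y}$ and $\psi\colon g\circ f\sim\id_{X}$.

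First I would check that $g$ also induces a simplicial map on Gajer sets, i.e. that $g^{*}D\ov{p}\le D\ov{q}$. Fix a singular stratum $S$ of $Y$; write $S^{g}$ for the stratum of $X$ containing $g(S)$ and $(S^{g})^{f}$ for the stratum of $Y$ containing $f(S^{g})$. Since $\varphi$ is stratum-preserving, for each $y\in S$ the arc $\varphi(\{y\}\times I)$ lies in a single stratum, which contains both $f(g(y))$ and $y$; hence $f(g(S))\subseteq S$, and as $f(g(S))\subseteq(S^{g})^{f}$ and strata are disjoint we get $(S^{g})^{f}=S$. Using the hypothesis $f^{*}D\ov{q}=D\ov{p}$ on $X$ then gives
$$g^{*}D\ov{p}(S)=D\ov{p}(S^{g})=f^{*}D\ov{q}(S^{g})=D\ov{q}\big((S^{g})^{f}\big)=D\ov{q}(S),$$
so in fact $g^{*}D\ov{p}=D\ov{q}$ and the simplicial map $\crG_{\ov{q},\ov{p}}g\colon\crG_{\ov{q}}Y\to\crG_{\ov{p}}X$, $\tau\mapsto g\circ\tau$, is defined. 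The same computation applied to the self-maps $f\circ g$ and $g\circ f$ shows that each of them preserves every perversity value, so $(f\circ g)^{*}D\ov{q}\le D\ov{q}$ and $(g\circ f)^{*}D\ov{p}\le D\ov{p}$.

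Next, functoriality of $\sigma\mapsto f\circ\sigma$ is immediate: composing the two induced maps yields $\crG_{\ov{p},\ov{q}}f\circ\crG_{\ov{q},\ov{p}}g=\crG_{\ov{q},\ov{q}}(f\circ g)$ and $\crG_{\ov{q},\ov{p}}g\circ\crG_{\ov{p},\ov{q}}f=\crG_{\ov{p},\ov{p}}(g\circ f)$, while $\crG_{\ov{q},\ov{q}}(\id_{Y})=\id$ and $\crG_{\ov{p},\ov{p}}(\id_{X})=\id$. Now I would feed the stratum-preserving homotopies $\varphi$ and $\psi$ into the homotopy invariance statement recalled above: viewing $\varphi$ as a stratified homotopy $(Y\times[0,1],\ov{q})\to(Y,\ov{q})$ between $f\circ g$ and $\id_{Y}$ — legitimate since $(f\circ g)^{*}D\ov{q}\le D\ov{q}$ — we conclude that $\crG_{\ov{q},\ov{q}}(f\circ g)$ is homotopic to $\id_{\crG_{\ov{q}}Y}$, and symmetrically $\crG_{\ov{p},\ov{p}}(g\circ f)$ is homotopic to $\id_{\crG_{\ov{p}}X}$. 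Hence $\crG_{\ov{p},\ov{q}}f$ and $\crG_{\ov{q},\ov{p}}g$ are mutually homotopy inverse, and since both Gajer simplicial sets satisfy the Kan condition this is a genuine homotopy equivalence, as claimed.

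I expect the only delicate point to be the bookkeeping of perversities — specifically the identity $(S^{g})^{f}=S$ and the verification that $\varphi,\psi$ genuinely qualify as morphisms of perverse spaces, for which one must fix the product filtration and induced perversity on $Y\times[0,1]$ and on $X\times[0,1]$ so that \cite[Proposition 2.5]{CST10} applies; everything else is formal functoriality.
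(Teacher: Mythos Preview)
Your argument is correct and is precisely the expected derivation of the corollary from \cite[Proposition~2.5]{CST10}: build $\crG_{\ov{q},\ov{p}}g$ from a stratified homotopy inverse $g$, use the stratum-preserving homotopy $\varphi$ to force $(S^{g})^{f}=S$ and hence $g^{*}D\ov{p}=D\ov{q}$, and then feed $\varphi,\psi$ into the homotopy invariance statement. Note, however, that the paper does not supply its own proof of this proposition---it is simply quoted as \cite[Corollary~2.6]{CST10}, with the enabling result \cite[Proposition~2.5]{CST10} recalled in the paragraph immediately preceding it---so there is no in-paper argument to compare against; your write-up is exactly the kind of unpacking one would expect behind that citation.
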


 \begin{proposition}\label{prop:gajerfib}
 Let $f\colon E\to B$ be a stratified fibration such that $B$ has only one stratum
 and let $\ov{p}$ be a perversity on $E$.
 The fibre $F$ of $f$ is endowed with the induced filtration. Then, the map
 $\crG_{\ov{p}}f\colon \crG_{\ov{p}}E\to \sing\,B$ is a Kan fibration of fibre $\crG_{\ov{p}}F$.
  \end{proposition}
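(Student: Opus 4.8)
The plan is to check the simplicial lifting property $\Lambda[n,j]\hookrightarrow \Delta[n]$ for $\crG_{\ov p}f$ by hand, producing the lift as an \emph{honest} lift through $f$ at the level of spaces and then verifying that the lifted simplex is $\ov p$-full. So suppose we are given a commuting square with $\Lambda[n,j]\to \crG_{\ov p}E$ and $\Delta[n]\to \sing B$; geometrically this is a map $\sigma_\Lambda\colon\Lambda^n_j\to E$ all of whose simplices are $\ov p$-allowable, together with a singular simplex $\tau\colon\Delta^n\to B$ with $f\circ\sigma_\Lambda=\tau|_{\Lambda^n_j}$. First I would use the hypothesis that $B$ has a single stratum: then $\tau$ is automatically a stratum-preserving homotopy in the sense of \defref{def:startifiedmap} (every map into a one-stratum space is stratum-preserving), and more importantly $f$ being a stratified fibration applies to \emph{any} diagram of the shape \eqref{equa:lifting}. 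Using the standard retraction $r\colon\Delta^n\times[0,1]\to (\Delta^n\times\{0\})\cup(\Lambda^n_j\times[0,1])$, I rewrite the lifting problem as a diagram with $Z=\Delta^n$, top map $\sigma_\Lambda$ precomposed with the collapse $\Delta^n\to\Lambda^n_j$ is not quite right — instead one feeds $(\Delta^n\times\{0\})\cup(\Lambda^n_j\times[0,1])\to E$ into the stratified homotopy lifting property of $f$ against the stratum-preserving homotopy $F=\tau\circ(\text{projection})\circ r^{-1}$-type map $\Delta^n\times[0,1]\to B$, exactly as in \defref{def:stratfibration}. This yields $\tF\colon\Delta^n\times[0,1]\to E$ with $f\circ\tF=F$; restricting to $\Delta^n\times\{1\}$ (or the appropriate face of the standard retraction) gives a continuous $\tilde\sigma\colon\Delta^n\to E$ with $f\circ\tilde\sigma=\tau$ and $\tilde\sigma|_{\Lambda^n_j}=\sigma_\Lambda$.

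The second, and really the only substantive, step is to show this $\tilde\sigma$ is $\ov p$-full, i.e. lies in $\crG_{\ov p}E$. The faces of $\tilde\sigma$ lying in $\Lambda^n_j$ are faces of $\sigma_\Lambda$, hence $\ov p$-allowable by assumption; the only new faces are $\tilde\sigma$ itself and its $j$-th face $d_j\tilde\sigma$. Here I would exploit the product structure coming out of the stratified homotopy lifting: for each singular stratum $S$ of $E$, the preimage $\tilde\sigma^{-1}(S)$ is controlled by the homotopy $\tF$. The key observation is that $\tF$ is \emph{stratum-preserving}, so $\tF^{-1}(S)$ is a union of sets of the form $W\times[0,1]$ with $W\subset\Delta^n$; since $\sigma_\Lambda^{-1}(S)\subset\Lambda^n_j$ has polyhedral dimension $\le \dim\Delta^n-1-2-D\ov p(S)$ (being a face-restriction of an allowable simplex on the $(n-1)$-dimensional horn), the preimage $\tilde\sigma^{-1}(S)$ sits inside the cylinder on a polyhedron of that dimension — more precisely one checks $\tilde\sigma^{-1}(S)$ is contained in a polyhedron of dimension at most $(\dim\Delta^n-1-2-D\ov p(S))+1=\dim\Delta^n-2-D\ov p(S)$, which is exactly the allowability bound \eqref{equa:admissibleST} for the top-dimensional simplex. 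The face $d_j\tilde\sigma$ inherits the sharper bound automatically. One must be careful to run the standard-retraction bookkeeping so that the $[0,1]$-coordinate genuinely adds at most $1$ to the polyhedral dimension, which is where \defref{def:dimension} (closure under "adding a polyhedral interval") does the work.

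The main obstacle I expect is precisely this dimension-count: the stratified fibration property only gives a \emph{stratum-preserving} lift $\tF$, and one must argue that being stratum-preserving forces the preimages $\tF^{-1}(S)$ to be cylinders (or at least to have polyhedral dimension bounded by $\dim(\tF^{-1}(S)\cap(\Delta^n\times\{0\}))+1$), and that this transfers to $\tilde\sigma^{-1}(S)$ after restricting to the end of the homotopy. A clean way to handle this is to phrase the lifting problem so that $\tF$ restricted to $\Lambda^n_j\times[0,1]$ is literally the constant homotopy on $\sigma_\Lambda$ (this is legitimate since $F$ can be taken constant there), making $\tF$ a homotopy rel $\Lambda^n_j$; then $\tilde\sigma$ and $\sigma_\Lambda$ on the horn have the \emph{same} preimages of $S$ outside the new cell, and the new cell contributes the single extra polyhedral dimension through the retraction. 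Once $\ov p$-fullness is established, it remains only to identify the fibre: the fibre of $\crG_{\ov p}f$ over the cone point $b\in B$ is by definition the set of $\ov p$-full simplices $\sigma\colon\Delta^\ell\to E$ with $f\circ\sigma\equiv b$, i.e. $\sigma$ factors through $F=f^{-1}(b)$; since $F$ carries the induced filtration and induced perversity (\defref{def:backperversity}), a simplex into $F$ is $\ov p$-allowable in $F$ iff it is $\ov p$-allowable in $E$ — the strata of $F$ being $F\cap S$ with the same codimensions. Hence the fibre is exactly $\crG_{\ov p}F$, completing the proof.
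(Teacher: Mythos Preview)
Your lifting argument is essentially the paper's: produce a stratum-preserving lift via the stratified homotopy lifting property, then use stratum-preservation to see that $\tilde\sigma^{-1}(S)$ is a cylinder over the horn's preimage, so its polyhedral dimension exceeds that of $\tau^{-1}(S)$ by exactly one and the allowability bound \eqref{equa:admissibleST} holds for $\tilde\sigma$ and for its one new face. The paper dispenses with your retraction bookkeeping by invoking directly the PL homeomorphism of pairs $(|\Delta[\ell]|,|\Lambda[\ell,k]|)\cong(\Delta^{\ell-1}\times[0,1],\Delta^{\ell-1}\times\{0\})$, which puts the problem immediately in the form of \defref{def:stratfibration} with $Z=\Delta^{\ell-1}$ and makes the identification $\tilde\sigma^{-1}(S)\cong\tau^{-1}(S)\times[0,1]$ literal; this removes the ``not quite right --- instead one feeds\dots'' detour from your write-up.

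Where you genuinely diverge is the fibre identification. You argue directly that the strict fibre over $b$ consists of the $\ov p$-full simplices of $E$ landing in $F$, and that allowability in $E$ and in $F$ coincide because the induced filtration preserves formal codimensions and the pullback perversity agrees stratum-wise. This is correct and more elementary than the paper's route: the paper instead appeals to Gajer's long exact sequence \cite[Theorem~2.2]{MR1404919}, compares it with the homotopy long exact sequence of the Kan fibration just established, and deduces via the five lemma that the natural map $\crG_{\ov p}F\to K$ into the strict fibre $K$ is a weak equivalence of Kan complexes. Your argument pins down the strict fibre on the nose; the paper's gives only a homotopy equivalence, but in exchange sidesteps the bookkeeping about how strata and codimensions in $F$ match those in $E$.
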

 
 \begin{proof}
 By definition of a Kan fibration, we have to solve the lifting problem
 $$\xymatrix{
 \land[\ell,k]\ar[r]\ar[d]&\crG_{\ov{p}}E\ar[d]\\
 \Delta[\ell]\ar[r]\ar@{-->}[ru]&\sing\,B.
 }$$
 By adjunction between $\sing$ and the realisation functor $|-|$, as  $\crG_{\ov{p}}E\subset \sing \,E$, this is equivalent to the lifting problem
 $$\xymatrix{
| \land[\ell,k]|\ar[r]^-{\tau}\ar@{^(->}[d]&E\ar[d]^{f}\\
| \Delta[\ell]|\ar[r]^-{\sigma}\ar@{-->}[ru]^-{\widetilde{\sigma}}&B
 }$$
 with $\widetilde{\sigma}$ of full $\ov{p}$-intersection.
 The  map $| \land[\ell,k]|\to | \Delta[\ell]|$ is homeomorphic to $\Delta^{\ell-1}\to \Delta^{\ell-1}\times [0,1]$.
Thus,  by hypothesis and \defref{def:stratfibration},
we have a stratum-preserving homotopy 
$\widetilde{\sigma}\colon \Delta^{\ell-1}\times [0,1]\to E$.
In this particular case, this means $f\circ \widetilde{\sigma}=\sigma$, $\widetilde{\sigma}(z,0)=\tau(z)$
and,  for any $z\in \Delta^{\ell-1}$, 
the image $\widetilde{\sigma}(z\times [0,1])$ is included in one stratum of $E$.
Let $S\subset E_{n-k}\menos E_{n-k-1}$ be a stratum. The previous properties imply
$\widetilde{\sigma}^{-1}(S)\cong \tau^{-1}(S)\times [0,1]
$ and
$\dim \widetilde{\sigma}^{-1}(S)=\dim \tau^{-1}(S)+1\leq \ell-k+\ov{p}(S)+1$. 
The same argument works for any face, thus $\widetilde{\sigma}\in \crG_{\ov{p}}E$.

\medskip 
By definition (\cite[Page 947]{MR1404919}) the map $f$ is a filtered fibration in the sense of Gajer.
From \cite[Theorem 2.2]{MR1404919}, we get a long exact sequence in homotopy
$\dots\to \pi_{*}(\crG_{\ov{p}}F)\to \pi_{*}(\crG_{\ov{p}}E)\to \pi_{*}(\sing\,B)\to\dots$.
If $K$ is the fibre of $\crG_{\ov{p}}E\to \sing\,B$, there is a simplicial map
$\crG_{\ov{p}}F\to K$. 
From the long exact sequence in homotopy of the fibration $\crG_{\ov{p}}f$ and the five lemma, we get  isomorphisms
$\pi_{*}(\crG_{\ov{p}}F)\cong \pi_{*}(K)$.
As $K$ and $\crG_{\ov{p}}F$ are Kan complexes, they are homotopy equivalent. 
 \end{proof}

 \begin{proposition}\label{prop:gajerpush}
Let $(X,\ov{p})$ be a perverse stratified space and $U$, $V$ be two open subsets of $X$,
endowed with the induced stratifications and perversities. We suppose that $U$, $V$, $U\cap V$
are path-connected.
Then  the following diagram is a homotopy pushout,
 \begin{equation}\label{equa:gajerpush}
 \xymatrix{
 \crG_{\ov{p}}(U\cap V)\ar[r]\ar[d]&\crG_{\ov{p}}U\ar[d]\\
 \crG_{\ov{p}}V\ar[r]&\crG_{\ov{p}}(U\cup V).
 }
 \end{equation}
 \end{proposition}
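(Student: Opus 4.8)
The plan is to reduce the statement to a Mayer--Vietoris/Van Kampen situation for the Kan complexes involved, in the style of \cite{CST10}, and then invoke the standard simplicial criterion for recognising a homotopy pushout. First I would recall that, since $U$, $V$, $U\cap V$ are open in $X$ with the induced filtrations and perversities, a simplex $\sigma\colon\Delta^\ell\to X$ that factors through $U$ (resp. $V$) is $\ov p$-full in $U$ (resp. $V$) if and only if it is $\ov p$-full in $X$; this is immediate from \defref{def:homotopygeom} and \defref{def:gajersimplicialset} because $\ov p$-allowability of $\sigma$ and of all its faces is a condition on the preimages of the singular strata, and the strata of $U$ are exactly the path-components of intersections of strata of $X$ with $U$, with the same perversity values. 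Consequently the four maps in~\eqref{equa:gajerpush} are all inclusions of sub-simplicial-sets of $\sing X$, and $\crG_{\ov p}U\cap\crG_{\ov p}V=\crG_{\ov p}(U\cap V)$ inside $\crG_{\ov p}(U\cup V)$.

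Next I would observe that $\crG_{\ov p}(U\cup V)$ is the union $\crG_{\ov p}U\cup\crG_{\ov p}V$ as a simplicial set. This is the key geometric point and I expect it to be the main obstacle: a priori a $\ov p$-full simplex $\sigma\colon\Delta^\ell\to U\cup V$ need not have image in $U$ or in $V$, so one must subdivide. The argument is a standard small-simplices/barycentric-subdivision argument, but it has to be carried out simplicially and compatibly with $\ov p$-fullness. I would use that $\{U,V\}$ is an open cover of $U\cup V$ and that iterated barycentric subdivision $\sd^N\Delta^\ell$ eventually refines the pullback cover $\{\sigma^{-1}U,\sigma^{-1}V\}$, so each simplex of $\sd^N\Delta^\ell$ maps into $U$ or into $V$; since subdivision does not increase polyhedral dimension of the preimages $\sigma^{-1}S$ and sends faces to unions of faces, $\ov p$-fullness is preserved under subdivision. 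Hence, up to the natural homotopy equivalence $|\sd^N\Delta^\ell|\simeq|\Delta^\ell|$ (compatible with the filtration, using \propref{prop:gajerequiv} or directly \cite[Proposition 2.5]{CST10}), every $\ov p$-full simplex of $U\cup V$ is, after subdivision, assembled from $\ov p$-full simplices of $U$ and of $V$; on homotopy groups and homology this yields that $\crG_{\ov p}(U\cup V)$ has the homotopy type of the (homotopy) union of $\crG_{\ov p}U$ and $\crG_{\ov p}V$ along $\crG_{\ov p}(U\cap V)$.

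Finally I would conclude with the simplicial recognition principle: if $A=A_1\cup A_2$ with $A_1\cap A_2$ and all $A_i$ Kan complexes and the square of inclusions is a pushout of simplicial sets with $A_1\cap A_2\hookrightarrow A_1$ a cofibration (automatic: every simplicial-set inclusion is a cofibration), then the square is a homotopy pushout. All four of $\crG_{\ov p}U$, $\crG_{\ov p}V$, $\crG_{\ov p}(U\cap V)$, $\crG_{\ov p}(U\cup V)$ are Kan complexes by \cite[Proposition 2.3]{CST10}, the path-connectedness hypotheses ensure the relevant fundamental groupoids are connected so no basepoint subtleties arise, and the previous two paragraphs identify~\eqref{equa:gajerpush} with such a pushout square up to homotopy equivalence. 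Therefore~\eqref{equa:gajerpush} is a homotopy pushout. I expect the subdivision step of the second paragraph — showing $\ov p$-fullness survives barycentric subdivision and that the subdivided simplices refine the cover — to be where the real work lies; everything else is formal.
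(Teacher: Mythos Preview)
Your overall structure matches the paper's: form the strict pushout $K$ of the two inclusions over $\crG_{\ov p}(U\cap V)$, note it is a homotopy pushout because the legs are monomorphisms of simplicial sets, and then argue that the comparison map $\varphi\colon K\to\crG_{\ov p}(U\cup V)$ is a weak equivalence. The divergence is in that last step.

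Your subdivision argument has a real gap. The assertion that barycentric subdivision preserves $\ov p$-fullness is not justified by the reasoning you give: $\ov p$-fullness of $\sigma\colon\Delta^\ell\to X$ bounds $\dim(F\cap\sigma^{-1}S)$ only for \emph{faces} $F$ of $\Delta^\ell$, whereas a $k$-simplex $\tau'$ of $\sd\Delta^\ell$ is typically not contained in any $k$-dimensional face of $\Delta^\ell$ (any $\tau'$ through the barycentre has carrier all of $\Delta^\ell$). From the allowability of $\sigma$ alone you obtain only
\[
\dim(\tau'\cap\sigma^{-1}S)\ \leq\ \min\bigl(k,\ \ell-2-D\ov p(S)\bigr),
\]
and for $k<\ell$ with $D\ov p(S)>-2$ this falls short of the required bound $k-2-D\ov p(S)$. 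So a $\ov p$-full simplex can subdivide into simplices that are not $\ov p$-full, and the small-simplices mechanism that works for $\sing$ does not transfer to $\crG_{\ov p}$ by the sentence ``subdivision does not increase polyhedral dimension of the preimages and sends faces to unions of faces''.

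The paper sidesteps this entirely. After observing that the legs are injective (so $K$ is already a homotopy pushout), it invokes \cite[Theorem~2.13]{CST10} to conclude that $\varphi$ is an isomorphism on homology with arbitrary local coefficients, and the stratified Van~Kampen theorem \cite[Theorem~4.1]{CST10} to conclude that $\varphi$ is an isomorphism on fundamental groups; with the path-connectedness hypotheses and the fact that all four corners are Kan complexes, $\varphi$ is then a weak equivalence. Whatever small-simplex or excision-type work those results require is packaged inside the cited references and not redone here; your proposal is essentially attempting to reprove them from scratch, and the shortcut you take does not survive the passage from $\sing$ to $\crG_{\ov p}$.
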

 
 \begin{proof}
 Denote by $K$ the pushout in $\sset$ of
 $\xymatrix{
 \crG_{\ov{p}}V&
 \crG_{\ov{p}}(U\cap V)\ar[l]\ar[r]&
 \crG_{\ov{p}}V.
 }$
 The two maps being injective, this is a homotopy pushout. By universal property, there is a canonical map
 $\varphi\colon K\to \crG_{\ov{p}}(U\cup V)$. By (\cite[Theorem 2.13]{CST10}),
 the map $\varphi$ induces an isomorphism in homology
 for any local coefficients and by the stratified Van-Kampen theorem  (\cite[Theorem 4.1]{CST10}), it induces an isomorphism between the
 fundamental groups. Therefore, the map $\varphi$ is a weak homotopy equivalence.
 \end{proof}
 
 \begin{corollary}\label{cor:holinkpush}
 Let $(X,\ov{p})$ be a homotopically stratified space with path-connected links and 
 $Y\subset X$ be a pure subset such that 
 $Y$, $X\menos Y$ and $\holink_{s}(X,Y)$ are path-connected. 
If $X$ is the homotopy pushout of
\begin{equation}\label{equa:losabia}
 \xymatrix{
Y&\holinks(X,Y)\ar[l]_-{\ev_{0}}\ar[r]^-{\ev_{1}}&X\menos Y,
  }\end{equation}
  then $\crG_{\ov{p}}X$ is the homotopy pushout of
  \begin{equation}\label{equa:losabiaG}
\xymatrix{
\crG_{\ov{p}}Y&&\crG_{\ov{p}}(\holinks(X,Y))\ar[ll]_-{\crG_{\ov{p}}\ev_{0}}\ar[rr]^-{\crG_{\ov{p}}\ev_{1}}&&
\crG_{\ov{p}}(X\menos Y).
 }\end{equation}
 \end{corollary}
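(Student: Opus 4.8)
The plan is to reduce the statement to \propref{prop:gajerpush} by displaying the homotopy pushout \eqref{equa:losabia} as a union of two open subsets. Since this homotopy pushout is, by definition, the double mapping cylinder $D$ of $\ev_{0}$ and $\ev_{1}$ equipped with the teardrop topology, the hypothesis provides a stratified homotopy equivalence $X\simeq_{s}D$; transporting $\ov{p}$ along it (using \defref{def:backperversity}) makes this equivalence preserve the complementary perversity, so $\crG_{\ov{p}}X\simeq\crG_{\ov{p}}D$ by \propref{prop:gajerequiv} and it suffices to work with $D$. Inside $D$, set $U=D\menos(X\menos Y)$ and $V=D\menos Y$: these are open (the two ends of the teardrop are closed), $U\cup V=D$, and $U\cap V$ is the open collar $\holinks(X,Y)\times(0,1)$. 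Collapsing the cylinder coordinate gives (nearly stratum-preserving) strong deformation retractions of $U$ onto $Y$, of $V$ onto $X\menos Y$ and of $U\cap V$ onto $\holinks(X,Y)\times\{\tfrac12\}$, so all three spaces are path-connected by the standing hypotheses on $Y$, $X\menos Y$ and $\holinks(X,Y)$.

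Applying \propref{prop:gajerpush} to the cover $\{U,V\}$ of $D$ presents $\crG_{\ov{p}}D$ as the homotopy pushout of $\crG_{\ov{p}}V\leftarrow \crG_{\ov{p}}(U\cap V)\to \crG_{\ov{p}}U$. It then remains to identify these three Gajer sets, and the two maps, with the data of \eqref{equa:losabiaG}. For the intersection term this is an instance of \propref{prop:gajerequiv}: the projection $\holinks(X,Y)\times(0,1)\to\holinks(X,Y)$ is a stratified homotopy equivalence preserving the complementary perversity, once $\holinks(X,Y)$ is given the perverse structure it inherits as the collar $U\cap V\subset D$ — and this is exactly the perverse structure with which $\crG_{\ov{p}}\ev_{0}$ and $\crG_{\ov{p}}\ev_{1}$ are defined in \eqref{equa:losabiaG}, the inequalities $\ev_{0}^{*}D\ov{p}\leq D\ov{p}$ and $\ev_{1}^{*}D\ov{p}\leq D\ov{p}$ then being satisfied. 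For the two outer terms one must upgrade the cylinder collapses to homotopy equivalences $\crG_{\ov{p}}U\simeq\crG_{\ov{p}}Y$ and $\crG_{\ov{p}}V\simeq\crG_{\ov{p}}(X\menos Y)$; here the structure of a homotopically stratified space enters through \propref{prop:hughesneighbor}, \propref{prop:BruceGreg} and \remref{rem:pairscylinders}, realising $U$ and $V$ as stratum-preserving pairs over $Y$ and over $X\menos Y$. Granting all of this, the observation that on the slice $\holinks(X,Y)\times\{\tfrac12\}$ the composites $U\cap V\hookrightarrow U\to Y$ and $U\cap V\hookrightarrow V\to X\menos Y$ are $\ev_{0}$ and $\ev_{1}$, together with the fact that stratum-preserving homotopic, perversity-compatible stratified maps induce homotopic simplicial maps on Gajer sets (\cite[Proposition 2.5]{CST10}), identifies the pushout square for $\crG_{\ov{p}}D$ with \eqref{equa:losabiaG}.

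The delicate point — and the step I expect to be the real obstacle — is everything packed into "preserves the complementary perversity" and "upgrade the cylinder collapse to a homotopy equivalence of Gajer sets", because the collapse is only \emph{nearly} stratum-preserving and so is not directly covered by \propref{prop:gajerequiv}. Handling it requires a careful analysis of the formal codimension in $D$ of each product stratum $S\times(0,1)$ arising from a stratum $S$ of $\holinks(X,Y)$, and a comparison with the codimensions of the strata of $Y$ and of $X\menos Y$ to which the ends of that cylinder are glued; this computation is what both fixes the perverse structure of $\holinks(X,Y)$ appearing in \eqref{equa:losabiaG} and shows that the allowability conditions defining $\crG_{\ov{p}}$ are insensitive to the collapse. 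Once this numerology is settled, the remainder is a formal diagram chase.
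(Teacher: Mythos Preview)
Your approach is essentially the paper's: cover the double mapping cylinder by two open half-cylinders, apply \propref{prop:gajerpush}, and identify the three resulting Gajer sets with those in \eqref{equa:losabiaG}. The only visible difference is cosmetic: the paper takes $U=(\holinks(X,Y)\times\,]0,3/4[\,)\cup Y$ and $V=(\holinks(X,Y)\times\,]1/4,1[\,)\cup(X\menos Y)$ rather than the full complements you use, and then simply asserts the homotopy equivalences $U\simeq Y$, $V\simeq X\menos Y$, $U\cap V\simeq\holinks(X,Y)$ and says ``the result follows from \propref{prop:gajerpush}''.

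In particular, the paper does \emph{not} unpack the point you flag as delicate --- the codimension bookkeeping and the passage from nearly stratum-preserving collapses to equivalences of Gajer sets --- and does not invoke \propref{prop:hughesneighbor}, \propref{prop:BruceGreg} or \remref{rem:pairscylinders} here. So you have correctly located where the substance lies; the paper treats it as routine, and your more cautious handling is not a different argument, just a more explicit version of the same one.
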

 
 \begin{proof}
The homotopy pushout of \eqref{equa:losabia} is the space
$$M=X\menos Y\sqcup \holinks(X,Y)\times I\sqcup Y /\sim,$$
where the relation $\sim$ is generated by
$(\omega,0)\sim \omega(0)$
and
$(\omega,1)\sim \omega(1)$. 
A basis of open subsets of the teardrop topology of $M$ (\cite{MR2354985}) consists of
the open subsets of the product $\holinks(X,Y)\times ]0,1[$ 
with the product topology and of the sets
$(\ev_{0}^{-1}(W)\times ]0,\varepsilon[)\cup W$, $(\ev_{1}^{-1}(W')\times ]1-\varepsilon,1[)\cup W'$,
 where $W$ is an open subset of $Y$ and
 $W'$  an open subset of $X\menos Y$.
We consider the two open subsets
$$\left\{
\begin{array}{ccl}
U&=&(\holinks(X,Y)\times ]0,3/4[)\cup Y,\\[.2cm]
V&=&(\holinks(X,Y)\times ]1/4,1[)\cup (X\menos Y).
\end{array}\right.$$
Their union is $M$, their intersection is $\holinks(X, U)\times ]1/4,3/4[$ and there are homotopy equivalences
 $U\simeq Y$, $V\simeq X\menos Y$, $U\cap V=\holinks(X,Y)$.
We thus have a pushout with open subsets
$$
\xymatrix{
U\cap V\ar[r]\ar[d]&U\ar[d]\\
V\ar[r]&M.
}$$
The result follows from \propref{prop:gajerpush}.
 \end{proof}
 
\section{Postnikov truncation of links}\label{sec:truncationlink}

\subsection{Linkwise localization}\label{sec:local}
 We denote by $\cT$ one of the categories $\top$ or $\sset$, pointed or not.
The following recalls concern the localization along a map $f\colon A\to B$ of $\cT$ between cofibrant spaces,
see \cite{MR1944041}.

\medskip
A fibrant space $W$ is said \emph{$f$-local}  if the induced map of simplicial sets,
$f^*\colon \Map(B,W)\to \Map(A,W)$ is a weak equivalence.
 A map $g\colon X\to Y$ between cofibrant spaces is an \emph{$f$-local equivalence}
if the induced map of simplicial sets, $g^*\colon \Map(Y,W)\to \Map(X,W)$ is a weak equivalence for every $f$-local 
space $W$.
 An \emph{$f$-localization} of a space $X$ is an $f$-local space $\ov{X}$ with an $f$-local equivalence
$j_{X}\colon X\to \ov{X}$.
Let $ f \colon A \to B$ be an injection if $f\in\sset$ or an inclusion of cell complexes if $f\in\top$.
Then for every space $X$, there exists (\cite[Theorem 1.3.11]{MR1944041}) a natural $f$-localization  $j_{X}\colon X\to L_{f}X$ with $j_{X}$ a cofibration.

\medskip
Localization can also be done for maps as shown by the following proposition extracted from \cite[Theorem 6.1.3]{MR1944041}.

 \begin{proposition}\label{prop:fibrelocal}
 There is a functorial factorization of every map $p\colon X\to Z$   of $\cT$  
as $X\xrightarrow{i} \ov{L}_{f}X\xrightarrow{q} Z$, called \emph{fibrewise $f$-localization} of $p$,
 such that the following properties are satisfied.
 \begin{enumerate}[(1)]
\item The map $q$ is a fibration with $f$-local fibres and the map $i$ is a cofibration and an $f$-local equivalence.
 Moreover, for any $z\in Z$, the map  induced by~$i$ between the homotopy fibres is an $f$-localization.
 \item For any decomposition of $p$ as  $X\xrightarrow{j}W\xrightarrow{r} Z$, 
 where $r$ a fibration with $f$-local fibres, there exists $k\colon \ov{L}_{f}X\to W$
 such that $k\circ i=j$ and $r\circ k=q$.
 Moreover, if $j$ is another fibrewise $f$-localization, then $k$ is a weak equivalence.
 \end{enumerate}
 \end{proposition}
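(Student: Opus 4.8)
This is in substance \cite[Theorem 6.1.3]{MR1944041}; the plan is to recall the construction and to expand the one point that may need it. Everything is to take place in the $f$-local (left Bousfield) localization $\cT_{f}$ of $\cT$. Since $\cT$ satisfies the hypotheses of \cite[Section 1.1.1]{MR1944041} and $f$ has the prescribed form, the model category $\cT_{f}$ exists: its cofibrations are those of $\cT$, its weak equivalences are the $f$-local equivalences, and its fibrant objects are exactly the $f$-local spaces. Both $\top$ and $\sset$ being right proper, $\cT_{f}$ is right proper as well, so a fibration of $\cT$ whose fibres are all $f$-local is a fibration of $\cT_{f}$.

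For item (1), I would apply the functorial factorization of the model category $\cT_{f}$ into a trivial cofibration followed by a fibration to the map $p\colon X\to Z$. This produces $X\xrightarrow{i}\ov{L}_{f}X\xrightarrow{q}Z$ in which $i$ is a cofibration of $\cT$ and an $f$-local equivalence and $q$ is a fibration of $\cT$ whose fibres are $f$-local. The remaining assertion of item (1) --- that $i$ induces an $f$-localization on the homotopy fibre of $p$ over each $z\in Z$ --- is the substance of \cite[Theorem 6.1.3]{MR1944041}, and rather than reprove it (by attaching fibrewise the cells of $f$ and comparing the long exact homotopy sequences of the homotopy fibres) I would quote it.

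For item (2), given a decomposition $X\xrightarrow{j}W\xrightarrow{r}Z$ of $p$ with $r$ a fibration with $f$-local fibres, the map $r$ is a fibration of $\cT_{f}$ and $i$ is a trivial cofibration of $\cT_{f}$; hence the commuting square
$$
\xymatrix{
X\ar[r]^-{j}\ar[d]_-{i}&W\ar[d]^-{r}\\
\ov{L}_{f}X\ar[r]_-{q}\ar@{-->}[ru]^-{k}&Z
}
$$
admits a diagonal $k\colon\ov{L}_{f}X\to W$ with $k\circ i=j$ and $r\circ k=q$. For the last assertion, suppose $(W,j,r)=(\ov{L}'_{f}X,i',q')$ is also a fibrewise $f$-localization. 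Then $i$ and $i'$ are $f$-local equivalences, so $k$ is an $f$-local equivalence by two-out-of-three. Over any $z\in Z$, the map $k_{z}\colon F_{z}\to F'_{z}$ that $k$ induces on fibres is a map between two $f$-localizations of the homotopy fibre of $p$ over $z$ (by item (1)), compatible with the two localization maps; any such comparison map is a weak equivalence. Thus $k$ restricts to a weak equivalence on every fibre of the fibrations $\ov{L}_{f}X\to Z$ and $W\to Z$, and is therefore a weak equivalence.

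The real obstacle is item (1): the fibrewise localization with the stated effect on homotopy fibres is exactly what \cite[Theorem 6.1.3]{MR1944041} provides, and the reasonable course is to invoke it; the rest is formal manipulation inside $\cT_{f}$ together with the uniqueness of $f$-localizations.
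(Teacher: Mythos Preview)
The paper does not prove this proposition; it is simply stated as ``extracted from \cite[Theorem 6.1.3]{MR1944041}'', with no argument given. So there is nothing to compare your proof against, and the question becomes whether your sketch is internally sound.

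Your overall architecture is the right one, but there is a genuine slip. You assert that ``$\top$ and $\sset$ being right proper, $\cT_{f}$ is right proper as well'', and then use this to conclude that a $\cT$-fibration with $f$-local fibres is a $\cT_{f}$-fibration. Left Bousfield localizations do \emph{not} in general inherit right properness; this is a well-known failure. More to the point, even granting right properness of $\cT_{f}$, your stated implication does not follow from it. The characterization of fibrations in $\cT_{f}$ (e.g.\ Hirschhorn, Propositions~3.3.16 and~3.4.6) uses right properness of the \emph{original} category $\cT$ and involves a homotopy pullback condition against a local fibrant replacement, not merely locality of the fibres; in particular the clean statement that $\cT_{f}$-fibrations are exactly $\cT$-fibrations with $f$-local fibres typically requires the base to already be $f$-local, which is not assumed in item~(2).

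This matters only for your lifting argument in item~(2). A correct route is to observe that in Hirschhorn's construction the map $i\colon X\to \ov{L}_{f}X$ is built as a transfinite composition of pushouts of the generating trivial cofibrations of $\cT_{f}$ (the ordinary horns together with the ``horns on $f$''), and to check directly that a $\cT$-fibration $r$ with $f$-local fibres has the right lifting property with respect to each of these generators; this is precisely what $f$-locality of the fibres buys. Your item~(1) is fine as written --- the fibres of the resulting $\cT_{f}$-fibration $q$ are $f$-local because they are pullbacks of $q$ along points, and points are $\cT_{f}$-fibrant --- and your uniqueness argument via the induced maps on fibres is correct. Only the justification for the existence of $k$ in item~(2) needs to be rewritten.
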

 
 Recall that a functorial factorization in $\cT$ means that any map in $\cT$ factors into a composite of two maps,
 in a way that depends functorially on commutative square (\cite{nlab:functorial_factorization}).
 
 \medskip
 Let $X$ be a homotopically stratified space  with a finite poset of strata $\cS$. If $S$ is a bottom stratum,
 the stratified fibration, $\ev_{0}\colon \holinks(X,S)\to S$, is a fibration. 
 Its fibre in $x_{0}\in S$ is the local holink $L=\holinks(X,x_{0})$ and we can apply
 to $\ev_{0}$ the fibrewise localization along any map $f\colon A\to B$ of $\top$.
 We then replace the space X, obtained as the homotopy pushout of 
 \begin{equation}\label{equa:leX}
 \xymatrix{
 X\menos S&\holinks(X,S)\ar[l]\ar[r]&S,
 }
 \end{equation}
 by the space $\cL_{f}X$, defined as the homotopy pushout of 
  \begin{equation}\label{equa:leLfX}
  \xymatrix{
 X\menos S&\holinks(X,S)\ar[l]\ar[r]&\ov{L}_{f}\holinks(X,S).
 }
 \end{equation}
 For instance, in the particular case of a cone on a space $ X=\rc Y$,
 of apex $\tv$, stratified by  $\tv\preceq Y\times [0,1[$, we have
  \begin{equation}\label{equa:coneY}
 \xymatrix{
 Y\times ]0,1[&\holinks(\rc Y,\tv)\ar[l]_-{\ev_{1}}\ar[r]^-{\ev_{0}}&\tv.
 }
 \end{equation}
 The map $\ev_{1}$ is a weak equivalence. The fibration $\ev_{0}$ can be trivialized and its fibre has the
 homotopy type of $Y$. We therefore get $\cL_{f}(\rc Y)\simeq L_{f}Y$.
 In short, the link $Y$ has been localized along $f$.
 If the stratified space has more than two strata, we  repeat the previous process.  
 
 \medskip
 We  can also choose different localizing maps for each stratum: let
 $\Phi$ be a correspondence that associates to each stratum $S$ a map $\Phi(S)\colon A_{S}\to B_{S}$ of $\top$.
 The \emph{linkwise $\Phi$-localization,} $\cL_{\Phi}X$, of $X$ is obtained by induction on the number of strata by
 starting with $\cL_{\Phi}X=X$ if $X$ has only regular strata.
 In the general case, let $S$ be a bottom stratum of $X$. 
 The image by $\cL_{\Phi}$ of $X\menos S$ and $\holinks(X,S)$ is defined by induction
 since they have one stratum less than $X$.
 We therefore have a map
 $\cL_{\Phi}\holinks(X,S) \to \cL_{\Phi}(X\menos S)$.
 Let us also consider 
  the fibrewise $\Phi(S)$-localization of $\cL_{\Phi}(\ev_{0})\colon \cL_{\Phi}\holinks(X,S)\to S$:
 $$\cL_{\Phi}\holinks(X,S)\xrightarrow{i_{S}}  \ov{L}_{\Phi(S)}(\cL_{\Phi}\holinks(X,S))\xrightarrow{q_{S}} S.$$
  We define $\cL_{\Phi}X$ as the homotopy pushout of
 \begin{equation}\label{equa:linkwise}
 \xymatrix{
 \ov{L}_{\Phi(S)}(\cL_{\Phi}\holinks(X,S))&\cL_{\Phi}\holinks(X,S)\ar[l]\ar[r]&\cL_{\Phi}(X\menos S).
 }\end{equation}
We study it in the case of the Postnikov localization.

\subsection{Postnikov truncation}\label{subsec:postnikov}
Let $\ell\geq 0$ and $f_{\ell}\colon \ttS^{\ell+1}\to \ttD^{\ell+2}$ 
be the standard inclusion in $\top$ of the $(\ell+1)$-sphere in the $(\ell+2)$-ball,
see \cite[Section 1.5]{MR1944041}. 
A space $X$ is $f_{\ell}$-local if, and only if, $\pi_{i}X= 0$ 
for $i>\ell$ and every choice of basepoint.
The Postnikov projection $\rho_{\ell}\colon X\to P_{\ell}X$ is an $f_{\ell}$-localization map.
We also call it the \emph{Postnikov $\ell$-localization} of $X$.
A map $g\colon X\to Y$ is an $f_{\ell}$-local equivalence if, and only if,
$g$ induces isomorphisms $g_{*}\colon \pi_{i}X\to \pi_{i}Y$
for $i>\ell$ and every choice of basepoint.

 \medskip
The main result of this section shows the Gajer simplicial set  as a linkwise localization.

\begin{theoremb}\label{thm:gajer2strata}
Let $(X,\ov{p})$ be a perverse manifold homotopically stratified  space with a finite number of strata and connected 
local holinks.
Let $\Phi$ be the correspondence associating to any stratum $S\in \cS$ the injection 
$\ttS^{D\ov{p}(S)+1}\to \ttD^{D\ov{p}(S)+2}$.
Then, the linkwise localization $\cL_{\Phi}X$ has the homotopy type of the realisation
$|\crG_{\ov{p}}X|$ of the Gajer simplicial set associated to $(X,\ov{p})$.
\end{theoremb}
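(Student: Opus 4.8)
The strategy is to prove the statement by induction on the depth (equivalently, the number of strata) of $X$, using the homotopy pushout descriptions on both sides and reducing the inductive step to the depth-1 case, which is essentially \thmref{prop:gajerneighbor}. The base case is trivial: if $X$ has only regular strata, then every simplex is $\ov{p}$-full, so $\crG_{\ov{p}}X=\sing X$ and $|\crG_{\ov{p}}X|\simeq X=\cL_{\Phi}X$ by definition of the linkwise localization. For the inductive step, pick a bottom stratum $S$ of $X$. By the Corollary at the end of \secref{sec:quinn}, $X$ is the homotopy pushout of $X\menos S\xleftarrow{\ev_{1}}\holinks(X,S)\xrightarrow{\ev_{0}}S$; and since $X\menos S$ and $\holinks(X,S)$ (filtered as in \defref{def:holink}) have strictly smaller depth, the inductive hypothesis applies to them. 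By \corref{cor:holinkpush}, $\crG_{\ov{p}}X$ is the homotopy pushout of $\crG_{\ov{p}}(X\menos S)\xleftarrow{}\crG_{\ov{p}}\holinks(X,S)\xrightarrow{}\crG_{\ov{p}}S$, and realisation preserves homotopy pushouts; so it suffices to identify, up to homotopy equivalence compatible with the maps, the two legs of this pushout with the two legs of \eqref{equa:linkwise}.

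**The key identifications.** The leg $|\crG_{\ov{p}}(X\menos S)|\simeq \cL_{\Phi}(X\menos S)$ and the leg $|\crG_{\ov{p}}\holinks(X,S)|\simeq \cL_{\Phi}\holinks(X,S)$ are furnished directly by the inductive hypothesis; here one must check that the perversities induced on $X\menos S$ and on $\holinks(X,S)$ (via \defref{def:backperversity}, using $\ev_{0},\ev_{1}$) are the ones for which $\Phi$ restricts correctly, i.e. the value $D\ov{p}(S')$ attached to a stratum $S'$ is unchanged under these restrictions — which follows since $\codim$ is computed in the ambient $X$ and the holink filtration keeps track of it. The remaining, and main, point is the identification of $|\crG_{\ov{p}}S|$ with $\ov{L}_{\Phi(S)}\bigl(\cL_{\Phi}\holinks(X,S)\bigr)$ compatibly with the map from $|\crG_{\ov{p}}\holinks(X,S)|\simeq\cL_{\Phi}\holinks(X,S)$. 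Since $S$ has a single stratum (it is a manifold), $\crG_{\ov{p}}S=\sing S$ and $|\crG_{\ov{p}}S|\simeq S$; so what must be shown is that the composite $\cL_{\Phi}\holinks(X,S)\simeq|\crG_{\ov{p}}\holinks(X,S)|\xrightarrow{|\crG_{\ov{p}}\ev_{0}|}|\crG_{\ov{p}}S|\simeq S$ is, up to homotopy, a fibrewise $\Phi(S)$-localization of $\cL_{\Phi}(\ev_{0})$. By \propref{prop:fibrelocal}(2) this reduces to showing that $|\crG_{\ov{p}}\ev_{0}|$ is a fibration with $\Phi(S)$-local fibres and that the map on fibres is a $\Phi(S)$-localization.

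**The heart of the matter.** By \propref{prop:hughesneighbor} (or \propref{prop:gajerfib} applied after replacing $\ev_0\colon\holinks(X,S)\to S$ by a stratified fibration), $\crG_{\ov{p}}\ev_{0}\colon\crG_{\ov{p}}\holinks(X,S)\to\sing S$ is a Kan fibration whose fibre is $\crG_{\ov{p}}L$, where $L=\holinks(X,x_{0})$ is the local holink with its induced filtration and perversity. So the fibre of $|\crG_{\ov{p}}\ev_{0}|$ over $x_{0}$ is $|\crG_{\ov{p}}L|$, which by the inductive hypothesis (applied to the cone $\rc L$, whose link is $L$, exactly as in the cone example following \eqref{equa:coneY}) is $\cL_{\Phi}L\simeq L_{\Phi(S)}(\text{localized link})$; more precisely one wants $|\crG_{\ov{p}}L|\simeq L_{f_{D\ov{p}(S)}}\bigl(\cL_{\Phi}L\bigr)$, i.e. the $D\ov{p}(S)$-th Postnikov stage of the (already linkwise-localized) local holink. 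This last statement — that passing to $\ov{p}$-full simplexes of the local holink realises precisely the Postnikov $D\ov{p}(S)$-truncation — is the crux, and it is where the polyhedral-dimension condition \eqref{equa:admissibleST} of \defref{def:homotopygeom} does its work: a $\ov p$-full simplex $\sigma\colon\Delta^{\ell}\to \rc L$ meeting the apex $\tv$ in a set of polyhedral dimension $\le\ell-2-D\ov p(\tv)$ is, after a standard conical-radial deformation, pushed off the apex whenever $\ell\le D\ov p(\tv)$, giving $\pi_{i}|\crG_{\ov p}\rc L|=0$ for $i>D\ov p(\tv)$ and an isomorphism on $\pi_{i}$ for $i\le D\ov p(\tv)$ — i.e. the $f_{D\ov p(\tv)}$-localization. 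I expect this conical-deformation argument, and the bookkeeping needed to run it fibrewise over $S$ (so that the fibration $|\crG_{\ov p}\ev_0|$ really is the fibrewise Postnikov localization rather than merely fibrewise-equivalent to it), to be the main obstacle; the rest is assembling \corref{cor:holinkpush}, \propref{prop:fibrelocal}, \propref{prop:gajerfib} and the inductive hypothesis, together with the fact that homotopy pushouts with teardrop topology are preserved by the equivalences of \remref{rem:pairscylinders}.
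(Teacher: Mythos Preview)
Your inductive setup and the identifications of the legs over $X\menos S$ and over $\holinks(X,S)$ match the paper's argument. The gap is in the third leg. You invoke \corref{cor:holinkpush} to present $\crG_{\ov{p}}X$ as a homotopy pushout with corner $\crG_{\ov{p}}S$, declare $\crG_{\ov{p}}S=\sing S$ because $S$ is a single stratum, and then try to show that $|\crG_{\ov{p}}\ev_{0}|\colon |\crG_{\ov{p}}\holinks(X,S)|\to S$ \emph{is} a fibrewise $\Phi(S)$-localization. But this is false: by \propref{prop:gajerfib} the fibre of $\crG_{\ov{p}}\ev_{0}$ is $\crG_{\ov{p}}L$, which by your own inductive hypothesis is $\cL_{\Phi}L$, and this is \emph{not} $\Phi(S)$-local in general. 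Already in the two-strata case $L$ is a manifold with one regular stratum, $\crG_{\ov{p}}L=\sing L$, and $L$ typically has homotopy in all degrees. So $S$ cannot be the fibrewise Postnikov $D\ov{p}(S)$-localization: the latter has fibre $P_{D\ov{p}(S)}(\cL_{\Phi}L)$, not a point. Your ``heart of the matter'' paragraph conflates $\crG_{\ov{p}}L$ with $\crG_{\ov{p}}\rc L$; it is only the \emph{cone} that produces the truncation.

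The paper therefore does not route through \corref{cor:holinkpush}. It applies \propref{prop:gajerpush} directly to the open cover $X=N\cup(X\menos S)$, where $N$ is a nearly stratum-preserving deformation retract neighborhood of $S$ supplied by \propref{prop:hughesneighbor}. The relevant corner of the pushout is then $\crG_{\ov{p}}N$, not $\crG_{\ov{p}}S$. By \propref{prop:BruceGreg} one has $N\simeq_{s}\rcyl(\ev_{0})$, and the stratified fibration $\ov{\ev}_{0}\colon\rcyl(\ev_{0})\to S$ has fibre the \emph{cone} $\rc L$. The cited result \cite[Corollary~3.7]{CST10} gives $\crG_{\ov{p}}\rc L\simeq P_{D\ov{p}(S)}(\crG_{\ov{p}}L)$, so $\crG_{\ov{p}}(\ov{\ev}_{0})\colon\crG_{\ov{p}}\rcyl(\ev_{0})\to\sing S$ is a fibration with $\Phi(S)$-local fibres, and now \propref{prop:fibrelocal}(2) yields the comparison with $\widetilde{P}_{D\ov{p}(S)}(\crG_{\ov{p}}\holinks(N,S))$. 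In short, the extra Postnikov truncation you are looking for comes from the cone direction in the tubular neighborhood of $S$, not from $S$ itself; replace your corner $S$ by $\crG_{\ov{p}}N\simeq\crG_{\ov{p}}\rcyl(\ev_{0})$ and the argument goes through.
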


%
\begin{proof}[Proof of \thmref{thm:gajer2strata}]
The result is obvious  for spaces with only  one stratum. 
We use an induction on the number of strata and  assume the result  true for 
the spaces having strictly less than $k$ strata.
 Let $(X,\ov{p})$ be  as in the statement with $k$ strata. We
choose a bottom stratum $S$ of $X$. We know from \propref{prop:hughesneighbor} that
the evaluation map $\ev_{0}$ is a stratified fibration and,  
thus a fibration since $S$ is unstratified.
From \propref{prop:hughesneighbor}, we also know that $S$ admits a neighborhood $N$ in $X$ 
with a nearly stratum-preserving retraction taking $N$ into $S$ relatively to~$S$. 
Moreover, from \propref{prop:BruceGreg}, we have the existence of a stratified homotopy equivalence 
between $N$ and  the mapping cylinder of $\ev_{0}\colon \holinks(N,S)\to S$,
$$N \simeq_{s} \rcyl(\ev_{0})=
 (\holinks(N,S)\times [0,1[)\sqcup S/(\omega,0)\sim \omega(0).
 $$
 This mapping cylinder  has the teardrop topology and a  filtration defined by $S$ and the ends of the paths in 
 $\holinks(N,S)$. 
 The map $\ov{\ev}_{0}\colon \rcyl(\ev_{0})\to S$ sends $s\in S$ to itself and $(\omega,t)$ to $\omega(0)$. This is a stratified fibration of fibre $\rc L$, 
see \cite[Proposition 3.3]{MR2130860}.
Recall that $P_{\ell}$ and $\widetilde{P}_{\ell}$ are, respectively, the Postnikov $\ell$-truncation and
the fibrewise Postnikov $\ell$-truncation.
We consider the following diagram in $\sset$:
 \begin{equation}\label{equa:bigpush}
 \xymatrix{
 P_{D\ov{p}(S)}(\crG_{\ov{p}}L)\ar[d]&
 \crG_{\ov{p}}L\ar[l]\ar[r]\ar[d]&
 \crG_{\ov{p}}\rc L\ar[d]\\
  \widetilde{P}_{D\ov{p}(S)}(\crG_{\ov{p}}\holinks(N,S))\ar[d]_{q}&
\crG_{\ov{p}}\holinks(N,S)\ar[l]_-{i}\ar[r]^-{j}\ar[d]_{\crG_{\ov{p}}(\ev_{0})}&
\crG_{\ov{p}}\rcyl(\ev_{0})\ar[d]_{\crG_{\ov{p}}(\ov{\ev}_{0})}\\
 \sing \,S&\ar@{=}[l] \sing\,S\ar@{=}[r]&\sing\, S.
 }
 \end{equation}
 Here, $L=\holinks(N,x_{0})$ is the local  holink of $x_{0}\in S$.
 The left column  is the fibrewise  Postnikov $D\ov{p}$(S)-localization of $\crG_{\ov{p}}({\ev}_{0})$
 and the right one is the image of the fibration $\ov{\ev}_{0}$ by $\crG_{\ov{p}}$.
As  $\crG_{\ov{p}}\rc L$ is homotopy equivalent to the
 Postnikov $P_{D\ov{p}(S)}$-section of~$\crG_{\ov{p}}L$ (\cite[Corollary 3.7]{CST10}),
 we deduce from the point (2) of \propref{prop:fibrelocal}, the existence of a homotopy equivalence
 $k\colon \widetilde{P}_{D\ov{p}(S)}(\crG_{\ov{p}}\holinks(N,S))\to \crG_{\ov{p}}\rcyl(\ev_{0})$
 such that $\crG_{\ov{p}}(\ov{\ev}_{0})\circ k=q$ and $k\circ i=j$.

\medskip
 Finally \propref{prop:gajerequiv} implies the existence of a homotopy equivalence between
$\crG_{\ov{p}}\rcyl(\ev_{0})$ and  $\crG_{\ov{p}}N$ and we have obtained a homotopy equivalence 
 $ \widetilde{P}_{D\ov{p}(S)}(\crG_{\ov{p}}\holinks(N,S))\simeq \crG_{\ov{p}}N$.
 In the following diagram of simplicial sets,  
 \begin{equation}\label{equa:firstpush}
 \xymatrix{
 \crG_{\ov{p}}\holinks(N,S)\ar[r]^-{\simeq}\ar[d]
 &\crG_{\ov{p}}(\rcyl(\ev_{0})\menos S)\ar[d]\ar[r]^-{\simeq}
 &\crG_{\ov{p}}(N\menos S)\ar[d]\\
\widetilde{P}_{D\ov{p}(S)}(\crG_{\ov{p}}\holinks(N,S))\ar[r]^-{\simeq}
&\crG_{\ov{p}}(\rcyl(\ev_{0}))\ar[r]^-{\simeq}
&\crG_{\ov{p}}N,
 }
 \end{equation}
 the  horizontal maps are homotopy equivalences. 
 For the top ones, this property comes from \remref{rem:pairscylinders} and for the lower ones 
 this has been established above. We conclude that the diagram \eqref{equa:firstpush} is  a homotopy pushout.
 
 \medskip
 We consider  the open covering $X=N\cup (X\menos S)$.
 As $S$ and $L$ are path-connected and $\ev_{0}$ is a fibration, we deduce that  $\holinks(X,S)$ is 
 path-connected and so is $N$.
 We can apply \propref{prop:gajerpush} and get a homotopy pushout
\begin{equation}\label{equa:secondpush}
\xymatrix{
 \crG_{\ov{p}}(N\menos S)\ar[r]\ar[d]&\crG_{\ov{p}}(X\menos S)\ar[d]\\
  \crG_{\ov{p}}N
  \ar[r]&\crG_{\ov{p}}X.
 }
 \end{equation}
With the existence of a stratified homotopy equivalence $\holinks(N,S)\simeq_{s} \holinks(X,S)$, the
juxtaposition of \eqref{equa:firstpush} and \eqref{equa:secondpush} is a homotopy pushout:
\begin{equation}\label{equa:lastpush}
\xymatrix{
 \crG_{\ov{p}}(\holinks(N,S))\ar[r]\ar[d]&\crG_{\ov{p}}(X\menos S)\ar[d]\\
\widetilde{P}_{D\ov{p}(S)}(\crG_{\ov{p}}\holinks(N,S))\ar[r]&\crG_{\ov{p}}X.
 }
 \end{equation}
 By induction and \remref{rem:pairscylinders}, we have a series of weak equivalences:\\
 $\cL_{\Phi}\holinks(X,S)\simeq |\crG_{\ov{p}}\holinks(X,S)|\simeq |\crG_{\ov{p}}\holinks(N,S)$,
 $|\crG_{\ov{p}}(X,S)|\simeq \cL_{\Phi}(X\menos S)$
 and\\
$|\widetilde{P}_{D\ov{p}(S)}(\crG_{\ov{p}}\holinks(N,S))|\simeq \widetilde{P}_{D\ov{p}(S)}(\cL_{\Phi}\holinks(X,S))$.
 Therefore, the diagram (\ref{equa:lastpush}) gives a homotopy pushout,
 \begin{equation}\label{equa:verylastpush}
\xymatrix{
 \cL_{\Phi}\holinks(X,S)\ar[r]\ar[d]&
 \cL_{\Phi}(X\menos S)\ar[d]\\
\widetilde{P}_{D\ov{p}(S)}(\cL_{\Phi}\holinks(X,S))\ar[r]&|\crG_{\ov{p}}X|,
 }
 \end{equation}
 and the result follows.
  \end{proof}

  In the particular case of a space with two strata, 
 \thmref{thm:gajer2strata} reduces to \thmref{prop:gajerneighbor}.

  \begin{example}\label{exam:examplesteenrod1}
  Let $F\to E\to B$ be a manifold bundle over manifold base.
 With a fibrewise conification, we get a fibration
  \begin{equation}\label{equa:exampleonestratum1}
\rc F\to X\to B.
\end{equation}
 This fibration admits a section, $x\in B\mapsto \tv_{x}\in X$, where $\tv_{x}$ is the apex of the fibre over $x$. 
We thus get an identification of the base $B$ as a closed subset of $X$ and we filter $X$ by
$\emptyset \subset X_{0}=B\subset X_{1}=X$.
The singular subset is $B$ and the link of $x\in B$ in $X$ is $F$. 
The Quinn presentation 
expresses $X$ as the homotopy pushout,
\begin{equation}\label{equa:twostrata}\xymatrix{
\holinks(E\times ]0,1],B)\ar[rr]^-{\ev_{1}}_-{\simeq}\ar@{->>}[d]_{\ev_{0}}&&
E\times ]0,1] \ar[d] \\
B\ar[rr]^-{\simeq}&&
X.
}\end{equation}
Thus the realisation of $\crG_{\ov{p}}X$ is the fibrewise $D\ov{p}(B)$-Postnikov localization of
$F\to E\to B$,
and the $\ov{p}$-intersection homotopy groups of \cite{CST10}, $\pi_{*}^{\ov{p}}X$, fits into the long exact sequence
$$\xymatrix{
\dots\ar[r]&
\pi_{j+1}B\ar[r]&
\pi_{j}P_{D\ov{p}(\tv)} F\ar[r]&
\pi_{j}^{\ov{p}}X\ar[r]&
\pi_{j}B\ar[r]&\dots
}$$
If we add hypotheses of nilpotency and finite type (as in \cite[Section 6]{CST2}), the fibration
$F\to E\to B$ admits a Sullivan model (\cite{MR0646078}),
$(\land Z,d)\to (\land Z\otimes \land W,D)\to (\land W, \ov{D})$.
The indice $\ell$ of a fibrewise Postnikov $\ell$-localization corresponds to the degree of the graded vector space $W$.
Thus, 
$(\land Z\otimes \land W^{\leq \ov{p}(B)},{D})$
is a Sullivan model of the Gajer space $\crG_{\ov{p}}X$.
In a future work, we will connect this construction with that of the perverse minimal model introduced in
\cite{CST1}.
\end{example}

\providecommand{\bysame}{\leavevmode\hbox to3em{\hrulefill}\thinspace}
\providecommand{\MR}{\relax\ifhmode\unskip\space\fi MR }
\providecommand{\MRhref}[2]{%
  \href{http://www.ams.org/mathscinet-getitem?mr=#1}{#2}
}
\providecommand{\href}[2]{#2}


\begin{thebibliography}{10}

\bibitem{MR2662593}
Markus Banagl, \emph{Intersection spaces, spatial homology truncation, and
  string theory}, Lecture Notes in Mathematics, vol. 1997, Springer-Verlag,
  Berlin, 2010. \MR{2662593}

\bibitem{MR2928934}
\bysame, \emph{First cases of intersection spaces in stratification depth 2},
  J. Singul. \textbf{5} (2012), 57--84. \MR{2928934}

\bibitem{MR3544285}
\bysame, \emph{Foliated stratified spaces and a de {R}ham complex describing
  intersection space cohomology}, J. Differential Geom. \textbf{104} (2016),
  no.~1, 1--58. \MR{3544285}

\bibitem{CST2}
David Chataur, Martintxo Saralegi-Aranguren, and Daniel Tanr\'e, \emph{Steenrod
  squares on intersection cohomology and a conjecture of {M} {G}oresky and {W}
  {P}ardon}, Algebr. Geom. Topol. \textbf{16} (2016), no.~4, 1851--1904.
  \MR{3546453}

\bibitem{CST1}
\bysame, \emph{Intersection cohomology, simplicial blow-up and rational
  homotopy}, Mem. Amer. Math. Soc. \textbf{254} (2018), no.~1214, viii+108.
  \MR{3796432}

\bibitem{CST9}
\bysame, \emph{{A reasonable notion of dimension for singular intersection
  homology}}, arXiv e-prints (2022), arXiv:2211.06090.

\bibitem{CST10}
\bysame, \emph{{Perverse homotopy groups}}, arXiv e-prints (2022),
  arXiv:2211.06096.

\bibitem{MR179795}
Edward Fadell, \emph{Generalized normal bundles for locally-flat imbeddings},
  Trans. Amer. Math. Soc. \textbf{114} (1965), 488--513. \MR{179795}

\bibitem{MR2009092}
Greg Friedman, \emph{Stratified fibrations and the intersection homology of the
  regular neighborhoods of bottom strata}, Topology Appl. \textbf{134} (2003),
  no.~2, 69--109. \MR{2009092 (2004g:55008)}

\bibitem{MR2130860}
\bysame, \emph{Intersection homology of regular and cylindrical neighborhoods},
  Topology Appl. \textbf{149} (2005), no.~1-3, 97--148. \MR{2130860
  (2005k:55003)}

\bibitem{MR2354985}
\bysame, \emph{Intersection homology of stratified fibrations and
  neighborhoods}, Adv. Math. \textbf{215} (2007), no.~1, 24--65. \MR{2354985
  (2008g:55010)}

\bibitem{MR1404919}
Pawe{\l} Gajer, \emph{The intersection {D}old-{T}hom theorem}, Topology
  \textbf{35} (1996), no.~4, 939--967. \MR{1404919 (97i:55013)}

\bibitem{MR1489215}
\bysame, \emph{Corrigendum: ``{T}he intersection {D}old-{T}hom theorem''
  [{T}opology {\bf 35} (1996), no. 4, 939--967; {MR}1404919 (97i:55013)]},
  Topology \textbf{37} (1998), no.~2, 459--460. \MR{1489215}

\bibitem{GM1}
Mark Goresky and Robert MacPherson, \emph{Intersection homology theory},
  Topology \textbf{19} (1980), no.~2, 135--162. \MR{572580 (82b:57010)}

\bibitem{GM2}
\bysame, \emph{Intersection homology. {II}}, Invent. Math. \textbf{72} (1983),
  no.~1, 77--129. \MR{696691 (84i:57012)}

\bibitem{MR1944041}
Philip~S. Hirschhorn, \emph{Model categories and their localizations},
  Mathematical Surveys and Monographs, vol.~99, American Mathematical Society,
  Providence, RI, 2003. \MR{1944041}

\bibitem{MR1695351}
Bruce Hughes, \emph{Stratifications of mapping cylinders}, Topology Appl.
  \textbf{94} (1999), no.~1-3, 127--145, Special issue in memory of B. J. Ball.
  \MR{1695351}

\bibitem{MR1686706}
\bysame, \emph{Stratified path spaces and fibrations}, Proc. Roy. Soc.
  Edinburgh Sect. A \textbf{129} (1999), no.~2, 351--384. \MR{1686706}

\bibitem{MR1954237}
\bysame, \emph{The approximate tubular neighborhood theorem}, Ann. of Math. (2)
  \textbf{156} (2002), no.~3, 867--889. \MR{1954237}

\bibitem{MR1410261}
Bruce Hughes and Andrew Ranicki, \emph{Ends of complexes}, Cambridge Tracts in
  Mathematics, vol. 123, Cambridge University Press, Cambridge, 1996.
  \MR{1410261}

\bibitem{MR1763954}
Bruce Hughes, Laurence~R. Taylor, Shmuel Weinberger, and Bruce Williams,
  \emph{Neighborhoods in stratified spaces with two strata}, Topology
  \textbf{39} (2000), no.~5, 873--919. \MR{1763954}

\bibitem{King}
Henry~C. King, \emph{Topological invariance of intersection homology without
  sheaves}, Topology Appl. \textbf{20} (1985), no.~2, 149--160. \MR{800845
  (86m:55010)}

\bibitem{MR251719}
M.~C. McCord, \emph{Classifying spaces and infinite symmetric products}, Trans.
  Amer. Math. Soc. \textbf{146} (1969), 273--298. \MR{251719}

\bibitem{MR71081}
John Nash, \emph{A path space and the {S}tiefel-{W}hitney classes}, Proc. Nat.
  Acad. Sci. U.S.A. \textbf{41} (1955), 320--321. \MR{71081}

\bibitem{nlab:functorial_factorization}
{nLab authors}, \emph{Functorial factorization},
  \url{https://ncatlab.org/nlab/show/functorial+factorization}, June 2023,
  \href{https://ncatlab.org/nlab/revision/functorial+factorization/7}{Revision
  7}.

\bibitem{MR873296}
Frank Quinn, \emph{Intrinsic skeleta and intersection homology of weakly
  stratified sets}, Geometry and topology ({A}thens, {G}a., 1985), Lecture
  Notes in Pure and Appl. Math., vol. 105, Dekker, New York, 1987,
  pp.~233--249. \MR{873296 (88g:57022)}

\bibitem{Qui}
\bysame, \emph{Homotopically stratified sets}, J. Amer. Math. Soc. \textbf{1}
  (1988), no.~2, 441--499. \MR{928266 (89g:57050)}

\bibitem{Spiegel}
{Spiegel, Matthias Jonas}, \emph{K-theory of intersection spaces},
  \url{http://archiv.ub.uni-heidelberg.de/volltextserver/15738/}, 2013, Mit dt.
  Zsfassung, Heidelberg.

\bibitem{MR0646078}
Dennis Sullivan, \emph{Infinitesimal computations in topology}, Inst. Hautes
  \'Etudes Sci. Publ. Math. (1977), no.~47, 269--331 (1978). \MR{0646078 (58
  \#31119)}

\bibitem{MR0054960}
Ren\'{e} Thom, \emph{Espaces fibr\'{e}s en sph\`eres et carr\'{e}s de
  {S}teenrod}, Ann. Sci. Ecole Norm. Sup. (3) \textbf{69} (1952), 109--182.
  \MR{0054960}

\bibitem{MR239613}
\bysame, \emph{Ensembles et morphismes stratifi\'{e}s}, Bull. Amer. Math. Soc.
  \textbf{75} (1969), 240--284. \MR{239613}

\bibitem{MR19306}
Hassler Whitney, \emph{Complexes of manifolds}, Proc. Nat. Acad. Sci. U.S.A.
  \textbf{33} (1947), 10--11. \MR{19306}

\end{thebibliography}
\end{document}